\theoremstyle{plain}
\newtheorem{theorem}{Theorem}[section]
\newtheorem{lemma}{Lemma}[section]
\theoremstyle{definition}
\newtheorem{remark}{Remark}[section]
\newtheorem{example}{Example}[theorem]
\newtheorem*{acknowledgement}{\textnormal{\textbf{Acknowledgements}}}
\begin{document}
\title[On approximate Birkhoff-James orthogonality and normal cones]{ On approximate Birkhoff-James orthogonality and normal cones in a normed space}
\author[Debmalya Sain, Kallol Paul and Arpita Mal]{Debmalya Sain, Kallol Paul and Arpita Mal}

\newcommand{\acr}{\newline\indent}

\address[Sain]{Department of Mathematics\\ Indian Institute of Science\\ Bengaluru 560012\\ Karnataka \\India\\ }
\email{saindebmalya@gmail.com}

\address[Paul]{Department of Mathematics\\ Jadavpur University\\ Kolkata 700032\\ West Bengal\\ INDIA}
\email{kalloldada@gmail.com}

\address[Mal]{Department of Mathematics\\ Jadavpur University\\ Kolkata 700032\\ West Bengal\\ INDIA}
\email{arpitamalju@gmail.com}

\thanks{ The research of the first author is sponsored by Dr. D. S. Kothari Postdoctoral fellowship. The third author would like to thank UGC, Govt. of India for the financial support.} 

\subjclass[2010]{Primary 46B20, Secondary 52A10}
\keywords{Approximate Birkhoff-James orthogonality; Normal cones}

\begin{abstract}
We study two notions of approximate Birkhoff-James orthogonality in a normed space, from a geometric point of view, and characterize them in terms of normal cones. We further explore the interconnection between normal cones and approximate Birkhoff-James orthogonality to obtain a complete characterization of normal cones in a two-dimensional smooth Banach space. We also obtain a uniqueness theorem for approximate Birkhoff-James orthogonality set in a normed space.

\end{abstract}

\maketitle

\section{Introduction.} 

Birkhoff-James orthogonality is arguably the most natural and well-studied notion of orthogonality in a normed space. Indeed, in the normed space setting, the close connection shared by Birkhoff-James orthogonality with various geometric properties like strict convexity, smoothness etc. can hardly be over emphasized. As a consequence, Birkhoff-James orthogonality plays a crucial role in exploring the geometry of normed spaces \cite{J}. In view of this, it is perhaps not surprising that various generalizations (and approximations) of Birkhoff-James orthogonality have been introduced and studied by several mathematicians, including Dragomir \cite{D} and Chmieli\'nski \cite{C}. In this paper, our aim is to study two different approximations of Birkhoff-James orthogonality, in order to have a better understanding of the geometry of normed spaces. Among other things, we exhibit that both types of approximate Birkhoff-James orthogonality have a close connection with normal cones in a normed space. Without further ado, let us establish our notations and terminologies to be used throughout this paper.\\
Let $\mathbb{X}$ be a normed space defined over $ \mathbb{R}, $ the field of real numbers. Let $ B_{\mathbb{X}} $ and $ S_{\mathbb{X}} $ denote the unit ball and the unit sphere of $ \mathbb{X} $ respectively, i.e.,  $ B_\mathbb{X}=\{x \in \mathbb{X} : \|x\| \leq 1\} $ and $ S_\mathbb{X}=\{x \in \mathbb{X} : \|x\|=1\}. $ For any two elements $x$ and $y$ in $\mathbb{X}$, $x$ is said to be orthogonal to $y$ in the sense of Birkhoff-James \cite{B} if $\|x + \lambda y \| \geq \|x\|$ for all real scalars $\lambda.$ Recently, Sain \cite{S} introduced the notions of $x^+,~x^-$ for a given element $ x $ of $ \mathbb{X}, $ in order to completely characterize Birkhoff-James orthogonality of linear operators on a finite dimensional Banach space. An element $y \in \mathbb{X}$ is said to be in $x^+$ if $\|x+ \lambda y\| \geq \|x\|$ for all $\lambda \geq 0$ and $y \in \mathbb{X}$ is said to be in $x^-$ if $\|x+ \lambda y\| \geq \|x\|$ for all $\lambda \leq 0$. 
The notion of Birkhoff-James orthogonality has been generalized by Dragomir \cite{D} in the following way, in order to obtain a suitable definition of approximate Birkhoff-James orthogonality in normed spaces.\\
Let $ \epsilon \in [0,1) .$ Then for $x, y \in \mathbb{X}$, $x$ is said to be approximate $ \epsilon- $ Birkhoff-James orthogonal to $y$  if 
\[ \|x + \lambda y\| \geq (1-\epsilon) \|x\| ~\forall \lambda \in \mathbb{R} .\] 

Later on, Chmieli\'nski \cite{C} slightly modified the definition in the following way: \\
Let $ \epsilon \in [0,1).$ Then for $x, y \in \mathbb{X}$, $x$ is said to be approximate $ \epsilon- $ Birkhoff-James orthogonal to $y$  if 
\[ \|x + \lambda y\| \geq \sqrt{1-\epsilon^2} \|x\| ~\forall \lambda \in \mathbb{R} .\]
In this case, we write $ x \bot_D^{\epsilon} y.$ \\

Chmieli\'nski \cite{C} also introduced another notion of approximate Birkhoff-James orthogonality, defined in the following way: \\
Let $ \epsilon \in [0,1) .$ Then for $x, y \in \mathbb{X}$, $x$ is said to be approximate $ \epsilon- $ Birkhoff-James orthogonal to $y$  if 
\[ \|x + \lambda y\|^2 \geq  \|x\|^2 - 2 \epsilon \|x\| \| \lambda y\| ~\forall \lambda \in \mathbb{R} .\] 
In this case, we write $ x \bot_B^{\epsilon} y.$ \\

It should be noted that in an inner product space, both types of approximate Birkhoff-James orthogonality coincide. However, this is not necessarily true in a normed space. We would also like to remark that in a normed space, both types of approximate Birkhoff-James orthogonality are homogeneous. \\
Recently Chmieli\'nski et al. \cite{CSW} characterized ``$ x \bot_B^{\epsilon} y$'' for real normed spaces by means of the following theorem: \\
\begin{theorem} [Theorem 2.3,\cite{CSW}]
Let $\mathbb{X}$ be a real normed space. For $ x, y \in \mathbb{X} $ and $ \epsilon \in [0,1) :$ 
\[ x \bot_B^{\epsilon} y \Leftrightarrow \exists ~z \in ~\mbox{Lin}\{x,y\} : x \bot_B z, \|z-y\| \leq \epsilon \|y\|.\]
\end{theorem}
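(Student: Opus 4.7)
My plan is to prove the two implications separately, since the reverse direction reduces to the triangle inequality while the forward direction requires a duality argument via James's theorem.

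For $(\Leftarrow)$, suppose $z \in \mathrm{Lin}\{x,y\}$ satisfies $x \bot_B z$ and $\|z-y\| \leq \epsilon\|y\|$. For any $\lambda \in \mathbb{R}$, combining $\|x+\lambda z\| \geq \|x\|$ with the triangle inequality gives $\|x+\lambda y\| \geq \|x+\lambda z\| - |\lambda|\|z-y\| \geq \|x\| - \epsilon|\lambda|\|y\|$. Squaring this (and separately noting that the target inequality is trivial when the right-hand side is negative) yields $\|x+\lambda y\|^2 \geq \|x\|^2 - 2\epsilon\|x\|\|\lambda y\| + \epsilon^2\|\lambda y\|^2$, which dominates $\|x\|^2 - 2\epsilon\|x\|\|\lambda y\|$, giving $x \bot_B^\epsilon y$.

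For $(\Rightarrow)$ the key tool is the functional characterization of Birkhoff-James orthogonality: $x \bot_B w$ iff there exists $f \in S_{\mathbb{X}^*}$ with $f(x)=\|x\|$ and $f(w)=0$. After dispatching the trivial cases $x=0$ or $y=0$, I would introduce $J(x) = \{f \in S_{\mathbb{X}^*} : f(x) = \|x\|\}$, which is nonempty, weak*-compact, and convex, and study the convex function $\phi(\lambda) = \|x+\lambda y\|$. Standard convex analysis, via Hahn-Banach norming functionals of $x+\lambda y$ together with weak*-cluster points as $\lambda \to 0$, identifies the one-sided derivatives as $\phi'_+(0) = \sup_{f \in J(x)} f(y) =: D^+$ and $\phi'_-(0) = \inf_{f \in J(x)} f(y) =: D^-$; and by convexity of $J(x)$, the image $\{f(y) : f \in J(x)\}$ is precisely the interval $[D^-, D^+]$.

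The heart of the argument is then to translate the hypothesis $x \bot_B^\epsilon y$ into the two bounds $D^+ \geq -\epsilon\|y\|$ and $D^- \leq \epsilon\|y\|$, obtained by dividing $\|x+\lambda y\|^2 - \|x\|^2 \geq -2\epsilon\|x\||\lambda|\|y\|$ by $\lambda$ and letting $\lambda \to 0^\pm$, using $(\phi(\lambda)^2 - \|x\|^2)/\lambda \to 2\|x\|\phi'_\pm(0)$. Together with $D^- \leq D^+$, these force $[D^-,D^+] \cap [-\epsilon\|y\|, \epsilon\|y\|] \neq \emptyset$, so some $f \in J(x)$ satisfies $|f(y)| \leq \epsilon\|y\|$. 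The witness is then $z := y - (f(y)/\|x\|)\,x$, which lies in $\mathrm{Lin}\{x,y\}$, is annihilated by $f$ (hence $x \bot_B z$ by James), and satisfies $\|z-y\| = |f(y)| \leq \epsilon\|y\|$. I expect the main obstacle to be cleanly establishing the subdifferential formula $\phi'_\pm(0) = D^\pm$ in the Banach setting; once that is in hand, the remaining steps are a short case analysis and algebra.
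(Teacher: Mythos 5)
This statement is quoted in the paper as Theorem 2.3 of the cited reference [CSW]; the paper itself supplies no proof, so there is no internal argument to compare yours against. On its own merits, your proposal is correct. The $(\Leftarrow)$ direction is exactly the expected triangle-inequality computation, and your case split is sound: squaring $\|x+\lambda y\|\geq \|x\|-\epsilon|\lambda|\|y\|$ is legitimate when the right-hand side is nonnegative, and when $\|x\|<\epsilon|\lambda|\|y\|$ the target $\|x\|^2-2\epsilon\|x\|\|\lambda y\|$ is already negative. For $(\Rightarrow)$, your route through $J(x)=\{f\in S_{\mathbb{X}^*}: f(x)=\|x\|\}$ recovers precisely the intermediate characterization on which the proof in [CSW] rests, namely $x\bot_B^{\epsilon} y$ if and only if there exists $f\in J(x)$ with $|f(y)|\leq \epsilon\|y\|$; the passage from such an $f$ to the witness $z=y-\frac{f(y)}{\|x\|}x$ is the same as in that reference. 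The one ingredient you flag as a potential obstacle, $\phi'_{\pm}(0)=\max/\min\{f(y):f\in J(x)\}$ together with the fact that $\{f(y):f\in J(x)\}$ is the closed interval $[D^-,D^+]$, is a standard consequence of convexity of $\phi$, convexity and weak*-compactness of $J(x)$ (Banach--Alaoglu applies since $\mathbb{X}^*$ is complete even if $\mathbb{X}$ is not), so no genuine gap remains; your limit computation $(\phi(\lambda)^2-\|x\|^2)/\lambda\to 2\|x\|\phi'_{\pm}(0)$ and the interval-intersection argument are both correct, and the degenerate cases $x=0$, $y=0$ are properly dispatched.
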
 

The above characterization of ``$ x \bot_B^{\epsilon} y$'' is essentially analytic in nature. In this paper, our aim is to explore the structure and properties of both types of approximate orthogonal vectors from a geometric point of view. In order to serve this purpose, we introduce the following two notations: \\
Given $x \in \mathbb{X} $ and $\epsilon \in [0,1), $ we define  
\[ F(x, \epsilon) = \{ y \in \mathbb{X} :  x \bot_D^{\epsilon} y \}.\]
\[ G(x, \epsilon) = \{ y \in \mathbb{X} :  x \bot_B^{\epsilon} y \}.\]
In this context, the concept of normal cones in a normed space play a very important role. Therefore, at this point of our discussion, the following definition is in order: \\
A subset $ K $ of $ \mathbb{X} $ is said to be a normal cone in $ \mathbb{X} $ if \\
$ (i)~ K + K \subset K, (ii)~ \alpha K \subset K $ for all $ \alpha \geq 0 $ and $ (iii)~ K \cap (-K) = \{\theta\}. $
Normal cones are important in the study of geometry of normed spaces because there is a natural partial ordering $ \geq $ associated with $ K, $ namely, for any two elements $ x, y \in \mathbb{X},~ x \geq y $ if $ x-y \in K. $ It is easy to observe that in a two-dimensional Banach space $ \mathbb{X}, $ any normal cone $ K $ is completely determined by the intersection of $ K $ with the unit sphere $ S_{\mathbb{X}}. $  Keeping this in mind, when we say that $ K $ is a normal cone in $ \mathbb{X}, $ determined by $ v_1, v_2, $ what we really mean is that $ K \cap S_{\mathbb{X}} = \{\frac{(1-t)v_1 + tv_2}{\| (1-t)v_1 + tv_2 \|} : t \in[0,1]\}. $ Of course, in this case $ K = \{\alpha v_1 + \beta v_2 : \alpha, \beta \geq 0\}. $ We prove that in a two-dimensional Banach space $ \mathbb{X}, $ given any $ x \in \mathbb{X} $ and any $ \epsilon \in [0, 1), $ \\
$ F(x, \epsilon) = K_1 \cup (-K_1) $ and $ G(x, \epsilon) = K_2 \cup (-K_2), $ where $ K_1, K_2 $ are normal cones in $ \mathbb{X}. $ In fact, in case of $ F(x, \epsilon), $ we prove something more. We show that in a two-dimensional smooth Banach space $ \mathbb{X}, $ given any normal cone $ K, $ there exists $ x \in S_{\mathbb{X}} $ and $ \epsilon \in [0, 1) $ such that $ F(x, \epsilon) = K \cup (-K). $ Indeed, this interconnection between normal cones and approximate $ \epsilon- $ Birkhoff-James orthogonality sets is a major highlight of the present paper. Equipped with the above mentioned interconnection, we proceed to obtain a complete geometric description of $ F(x, \epsilon) $ and $ G(x, \epsilon), $ when $ \mathbb{X} $ is a normed space of any dimension. In order to accomplish this goal, let us introduce the following two notations: \\
Let $\mathbb{X}$ be a normed space. For $x, y \in \mathbb{X}$ and $\epsilon \in [0,1),$ let $P_{x, y}(\epsilon), Q_{x, y}(\epsilon) $ denote the restriction of $ F(x, \epsilon) $ and $ G(x, \epsilon) $ respectively to the subspace spanned by $x$ and $y$. Using these notations, we obtain a complete geometric description of  $ F(x, \epsilon) $ and $ G(x, \epsilon), $ when $ \mathbb{X} $ is any normed space. We show that both $ F(x, \epsilon) $ and $ G(x, \epsilon) $ are union of two-dimensional normal cones. We also prove  a uniqueness theorem for $ F(x, \epsilon), $ first in the case of a two-dimensional Banach space and then for any normed space.

\section{Main results.}

\begin{theorem}\label{theorem:F(x,epsilon)}
Let $\mathbb{X}$ be a two-dimensional Banach space. Then for any $x \in S_{\mathbb{X}}$ and $\epsilon \in [0, 1),$ there exists a normal cone $K$ in $\mathbb{X}$ such that $F(x, \epsilon)= K \cup (-K)$.
\end{theorem}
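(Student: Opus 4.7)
The plan is to reinterpret $F(x,\epsilon)$ as the set of vectors whose span avoids an open ball around $x$, and then use two-dimensional convex geometry to extract the desired normal cone structure. Set $c := \sqrt{1-\epsilon^{2}} \in (0,1]$. For any $y \neq 0$ in $\mathbb{X}$, the substitution $t = -\lambda$ gives
\[
\min_{\lambda \in \mathbb{R}} \|x + \lambda y\| \;=\; \inf_{t \in \mathbb{R}} \|x - ty\| \;=\; d\bigl(x, \operatorname{span}(y)\bigr),
\]
so $x \bot_{D}^{\epsilon} y$ is equivalent to the 1-dimensional subspace $\operatorname{span}(y)$ having empty intersection with the open ball $B^{\circ}(x,c) := \{z \in \mathbb{X} : \|z-x\| < c\}$. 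Since $\|x\|=1 \ge c$, the origin lies outside $B^{\circ}(x,c)$, which is a nonempty open convex subset of $\mathbb{X}$.

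Next I would analyse the radial projection $U := \{v/\|v\| : v \in B^{\circ}(x,c)\} \subset S_{\mathbb{X}}$. As the continuous image of a nonempty connected open set in $\mathbb{X} \setminus \{0\}$, $U$ is a nonempty open connected subset of $S_{\mathbb{X}}$, that is, an open arc. The decisive observation is $U \cap (-U) = \emptyset$: if both $u$ and $-u$ lay in $U$, the segment joining corresponding preimages inside the convex set $B^{\circ}(x,c)$ would pass through $0$, forcing $0 \in B^{\circ}(x,c)$, a contradiction. Since $\operatorname{span}(y)$ meets $B^{\circ}(x,c)$ precisely when $y/\|y\| \in U \cup (-U)$, this identifies
\[
F(x,\epsilon) \cap S_{\mathbb{X}} \;=\; S_{\mathbb{X}} \setminus \bigl(U \cup (-U)\bigr),
\]
which is the complement in the circle $S_{\mathbb{X}}$ of two disjoint antipodal open arcs, hence a disjoint union $A \sqcup (-A)$ of two antipodal closed arcs. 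Moreover, $A$ contains no pair of antipodal points, since such a pair would put one of them into $A \cap (-A) = \emptyset$.

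To finish, let $v_{1}, v_{2}$ be the endpoints of $A$ (with $v_{1}=v_{2}$ in the degenerate case where $A$ collapses to a single point) and define $K := \{\alpha v_{1} + \beta v_{2} : \alpha, \beta \ge 0\}$. A standard planar argument gives $K \cap S_{\mathbb{X}} = A$, and the homogeneity of $F(x,\epsilon)$ (noted in the excerpt) then yields $F(x,\epsilon) = K \cup (-K)$. The three defining axioms of a normal cone are then immediate: (i) $K+K \subset K$ and (ii) $\alpha K \subset K$ for $\alpha \ge 0$ are built into the definition of $K$ as the set of nonnegative combinations of $v_{1},v_{2}$, while (iii) $K \cap (-K) = \{0\}$ follows from the non-antipodality of $v_{1}$ and $v_{2}$ (equivalently, from their linear independence, or collapse of $K$ to a ray in the degenerate case).

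The main obstacle -- and what makes this a genuinely two-dimensional statement -- is the geometric step showing that $S_{\mathbb{X}} \setminus (U \cup (-U))$ really does split into exactly two closed antipodal arcs. This relies essentially on both the convexity of $B^{\circ}(x,c)$ (to get connectedness of $U$ and disjointness from $-U$) and the 1-dimensional topology of $S_{\mathbb{X}}$ (complement of two disjoint open arcs in a circle is two closed arcs). In higher dimensions this direct approach fails, which presumably motivates the authors' later reduction to the 2-dimensional subspace $\operatorname{span}\{x,y\}$ through the notations $P_{x,y}(\epsilon)$ and $Q_{x,y}(\epsilon)$.
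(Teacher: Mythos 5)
Your proposal is correct, but it takes a genuinely different route from the paper's proof. You reformulate $x\perp_D^{\epsilon}y$ as $d(x,\operatorname{span}(y))\ge\sqrt{1-\epsilon^{2}}$, i.e.\ as the condition that $\operatorname{span}(y)$ misses the open convex ball $B^{\circ}(x,\sqrt{1-\epsilon^{2}})$, which does not contain the origin; the cone structure then falls out of the radial projection of that ball: $U$ is a nonempty open connected subset of $S_{\mathbb{X}}$ with $U\cap(-U)=\emptyset$, so $F(x,\epsilon)\cap S_{\mathbb{X}}=S_{\mathbb{X}}\setminus(U\cup(-U))$ is a pair of antipodal closed arcs containing no antipodal pair, and the positive hull of the endpoints of one arc is the required normal cone. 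The paper instead fixes $y\in S_{\mathbb{X}}$ with $x\perp_B y$, parametrizes the arcs $(1-t)x+ty$ and $-(1-t)x+ty$, proves by explicit rescaling identities (such as $x+\lambda w=\eta_1(x+\lambda_1 y)$) that the property $x\perp_D^{\epsilon}(\cdot)$ is monotone in $t$ along each arc, and obtains $v_1,v_2$ from the infima $t_1,t_2$ of the corresponding parameter sets. Your argument is shorter and more conceptual: convexity of the ball replaces the paper's norm estimates, and it makes the role of two-dimensionality (the circle topology of $S_{\mathbb{X}}$) completely transparent. What the paper's construction buys in exchange is an explicit location of $v_1$ and $v_2$ relative to a Birkhoff-orthogonal direction --- one endpoint in the cone of $x,y$ and the other in the cone of $-x,y$ --- which is precisely what powers the subsequent remark that exactly one of $v_1,v_2$ lies in $x^{+}$ and the other in $x^{-}$, as well as the later uniqueness argument. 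The few steps you leave as ``standard'' (that a nonempty open connected subset of $S_{\mathbb{X}}$ disjoint from its antipode is a proper open arc; that the complement of two disjoint antipodal open arcs consists of two antipodal closed arcs; that $K\cap S_{\mathbb{X}}=A$ when $A$ is an arc with endpoints $v_1,v_2$ containing no antipodal pair) are indeed routine facts about $S_{\mathbb{X}}\cong S^{1}$ and salient planar cones, so there is no genuine gap.
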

\begin{proof}
For $x\in S_{\mathbb{X}}$ there exists $y \in S_{\mathbb{X}}$ such that $x \perp_B y$. Consider 
\[A= \{t\in [0,1]: x \perp_D^{\epsilon}\{(1-t)x+ t y\}\},\]
\[B= \{t\in [0,1]: x \perp_D^{\epsilon}\{-(1-t)x+ t y\}\}\]
\begin{figure}[ht]
\centering 
\includegraphics{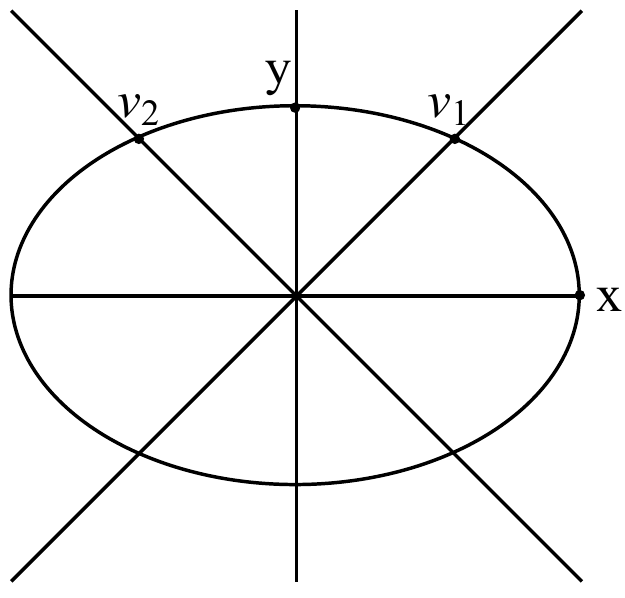}
\caption{}
\label{Figure 1}
\end{figure}

Clearly, both $A$ and $B$ are non empty proper subset of $[0,1],$ as $0\notin A \cup B$ and $1 \in A \cap B$. We next show that both $A$ and $B$ are closed. Suppose $\{t_n\}\subseteq A$ be such that $t_n \longrightarrow t$. Then $x \perp_D^{\epsilon}\{(1-t_n)x+ t_n y\} \Longrightarrow \|x + \lambda \{(1-t_n)x + t_n y\}\| \geq \sqrt{1-{\epsilon}^2}$ for all $\lambda \in \mathbb{R}$. Letting $n \longrightarrow \infty$ we have,  $\|x + \lambda \{(1-t)x + t y\}\| \geq \sqrt{1-{\epsilon}^2} $ for all $\lambda \in \mathbb{R}$. Then  $x \perp_D^{\epsilon} \{(1-t)x+ty\}$ and so $t \in A.$ This proves that $ A $ is closed. Similarly, one can show that $B$ is closed. \\
Let $t_1= \inf A,~ t_2= \inf B$. Clearly $ t_1 \neq 0,~ t_2 \neq 0.$ \\
Let $u_1= (1-t_1)x+t_1 y$, $u_2= -(1-t_2)x+t_2 y$ and $v_1=\frac{u_1}{\|u_1\|}$, $v_2=\frac{u_2}{\|u_2\|}$. Let $K$ be the normal cone determined by $v_1, v_2$. We next show that $F(x, \epsilon)= K \cup (-K)$. \\
If $z\in \mathbb{X}$ is such that $z=c\{(1-t)x+t y\}$, where $c> 0$ and $t< t_1$ then from the definition of infimum, $x \not \perp_D^{\epsilon} z$. Let $w \in \mathbb{X}$ be such that $w= c \{(1-t)x + t y\}$, where $c \geq 0$ and $1 \geq t \geq t_1$. We show that $x \perp_D^{\epsilon} w$. Let $\lambda > 0$. Choose $\eta_1=1+ \lambda c  (1-t) \geq 1$ and choose $\lambda_1=\frac{\lambda c t}{\eta_1}$. Then $x+ \lambda w = \eta_1(x+ \lambda_1 y)$. Hence
\begin{eqnarray*}
\|x+ \lambda w\|&=& |\eta_1|\|x+ \lambda_1 y\|\\
                &\geq & \|x+ \lambda_1 y\|\\
                & \geq & 1\\
                & \geq & \sqrt{1-{\epsilon}^2}
\end{eqnarray*}

\begin{figure}[ht]
\centering 
\includegraphics{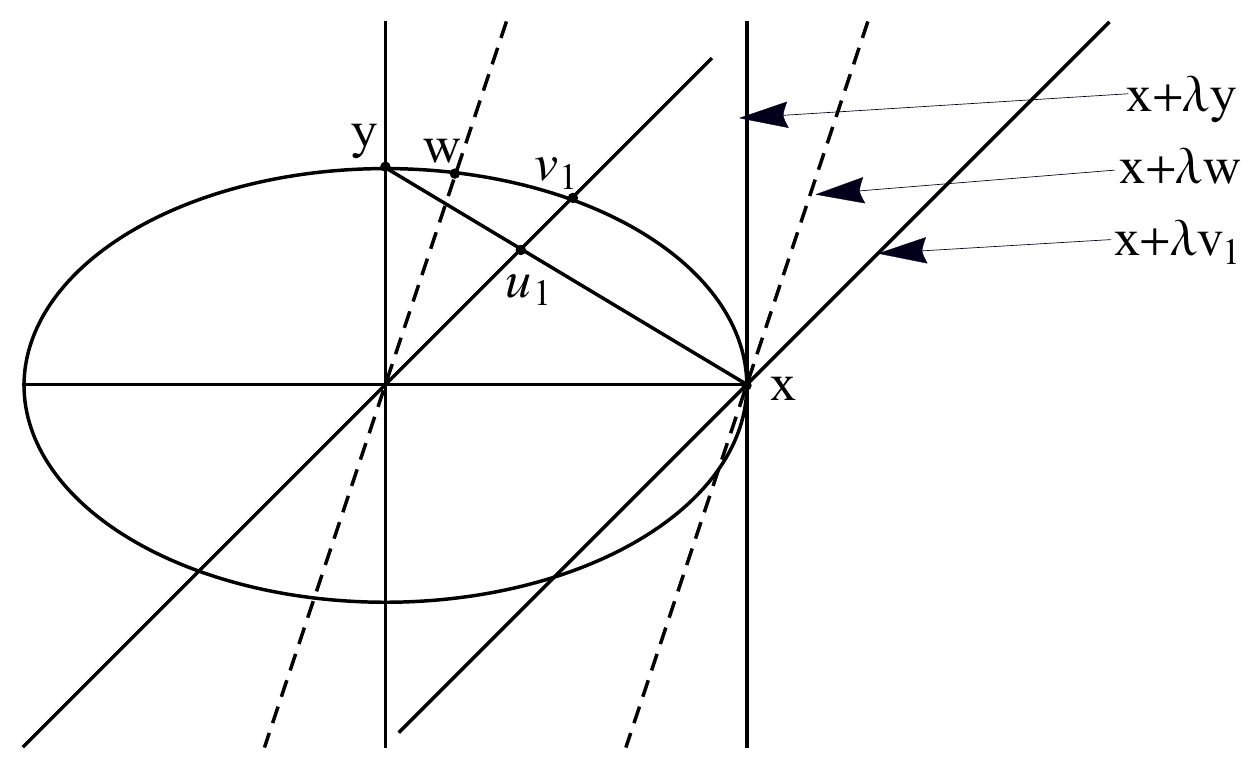}
\caption{}
\label{Figure 1}
\end{figure}
Let $\lambda < 0$. Write $w=b \{(1-s)x + s v_1\}$, where $b \geq 0$, $s \geq 1$. Choose $\eta_2=1+ \lambda b (1-s) \geq 1$ and $\lambda_2=\frac{\lambda b s}{\eta_2}$. Then $x+ \lambda w= \eta_2 (x+ \lambda_2 v_1)$. Hence  
\begin{eqnarray*}
\|x+ \lambda w\|&=&|\eta_2|\|x+ \lambda_2 v_1\|\\
                &\geq & \|x+ \lambda_2 v_1\|\\
                &\geq& \sqrt{1-{\epsilon}^2}
\end{eqnarray*}
Thus, for all $\lambda \in \mathbb{R}$, $\|x+ \lambda w\|\geq \sqrt{1-{\epsilon}^2} \Rightarrow x \perp_D^{\epsilon}w$. Similarly, if $z=c\{-(1-t)x+t y\}$, where $c> 0$ and $t< t_2$ then $x \not \perp_D^{\epsilon} z$ and if $w=d\{-(1-t)x+t y\}$, where $d\geq 0$ and $1 \geq t\geq t_2$ then $x \perp_D^{\epsilon} w$. Therefore, $F(x, \epsilon)= K \cup (-K)$. 
\end{proof}

\begin{remark}
We further observe that if $\epsilon \in (0,1)$ then $ t_1 < 1$, for if      
 $z \in S_{\mathbb{X}}$ and $\|z-y\| \leq \frac{1- \sqrt{1-{\epsilon}^2}}{1+ \sqrt{1-{\epsilon}^2}}$ then $\|x+ \lambda z\| \geq |\|\lambda z\|- \|x\||\geq |\lambda|-1 > \sqrt{1-{\epsilon}^2},$ whenever $|\lambda| > 1+ \sqrt{1-{\epsilon}^2}$. Also, if $|\lambda| \leq 1+\sqrt{1-{\epsilon}^2}$ then 
\begin{eqnarray*}
 \|x + \lambda z\| & = & \|x+ \lambda y + \lambda (z-y)\| \\
                   & \geq &\|x + \lambda y\| - |\lambda|\|z-y\| \\
									& \geq & 1- (1+\sqrt{1-{\epsilon}^2})\|z-y\| \\
									& \geq & \sqrt{1-{\epsilon}^2},
\end{eqnarray*}	
so that $x \perp_D^{\epsilon} z.$ This shows that there exists some  $t,~0< t < 1 $ such that $x \perp_D^{\epsilon} \{(1-t)x+ty\}$. Similarly, if $\epsilon \in (0,1)$ then  $ t_2 < 1$. 
\end{remark}

\begin{remark}\label{remark:plusminus}
From the proof of the Theorem \ref{theorem:F(x,epsilon)}, it is clear that if $ \epsilon > 0 $ then both $v_1,~v_2$ can not be simultaneously in $x^+$ (or $x^-)$. In fact, for $ \epsilon > 0, $ exactly one of $ v_1, v_2 $ will be in $ x^{+} $ and the other one will be in $ x^{-}. $ On the other hand, it is easy to observe that in two-dimensional smooth Banach spaces, $ v_1 = v_2, $ if $ \epsilon = 0. $
\end{remark}

Next, let us consider the set $S(x, \epsilon)=\{z\in S_{\mathbb{X}}: ~\inf_{\lambda \in \mathbb{R}}\|x+ \lambda z\|= \sqrt{1- {\epsilon}^2}\}$. In context of the previous theorem, it is possible to obtain a nice characterization of $ S(x, \epsilon). $ We accomplish the goal in the following theorem:

\begin{theorem}\label{theorem:solution}
Let $\mathbb{X}$ be a two-dimensional Banach space and $v_1, v_2$ be as in Theorem \ref{theorem:F(x,epsilon)}. Then $S(x, \epsilon)= \{\pm v_1, \pm v_2\}$, if $\epsilon \in (0,1)$ and $S(x, \epsilon)= \{y\in S_{\mathbb{X}} :~ x \perp_B y\}$ if $\epsilon=0$. 
\end{theorem}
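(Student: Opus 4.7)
The plan is to split the argument by the two cases and to exploit the decomposition $F(x,\epsilon)=K\cup(-K)$ of Theorem \ref{theorem:F(x,epsilon)} together with the functional $\phi(z):=\inf_{\lambda\in\mathbb{R}}\|x+\lambda z\|$. Two elementary properties of $\phi$ will be used repeatedly: $\phi$ is continuous on $S_{\mathbb{X}}$ (indeed, $\phi(z)=d(x,\mathrm{span}\{z\})$) and invariant under $z\mapsto -z$. The case $\epsilon=0$ is then immediate, since $\phi(z)=1$ is equivalent to $\|x+\lambda z\|\geq\|x\|$ for every $\lambda$, that is, to $x\perp_B z$.

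For $\epsilon\in(0,1)$ I would prove $S(x,\epsilon)=\{\pm v_1,\pm v_2\}$ in two halves. For the inclusion $\supseteq$, the construction of $v_1$ gives $v_1\in F(x,\epsilon)$ and hence $\phi(v_1)\geq\sqrt{1-\epsilon^2}$; for the reverse estimate, I would approximate from below using $t_1=\inf A$: for every $t<t_1$, $(1-t)x+ty\notin F(x,\epsilon)$, so $\phi$ of this vector is strictly below $\sqrt{1-\epsilon^2}$, and passing to the limit $t\uparrow t_1$ via continuity and scale-invariance of $\phi$ gives $\phi(v_1)\leq\sqrt{1-\epsilon^2}$. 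The same argument with $t_2$ settles $v_2$, and the sign symmetry $\phi(-z)=\phi(z)$ handles $-v_1,-v_2$.

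For the inclusion $\subseteq$, I would first dispose of $z\in S_{\mathbb{X}}\setminus F(x,\epsilon)$, where $\phi(z)<\sqrt{1-\epsilon^2}$ trivially; by symmetry it remains to show that if $z$ lies on the open arc of $K\cap S_{\mathbb{X}}$ strictly between $v_1$ and $v_2$, then $\phi(z)>\sqrt{1-\epsilon^2}$. Such a $z$ can be written as $c((1-t)x+ty)$ with $t\in(t_1,1]$ or as $d(-(1-t)x+ty)$ with $t\in(t_2,1]$, and I would re-run the two-case estimate from the proof of Theorem \ref{theorem:F(x,epsilon)}, now using the decomposition $z=b((1-s)x+sv_1)$ (or its $v_2$ analogue) with $s>1$ strictly. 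For $\lambda\geq 0$ one still gets $\|x+\lambda z\|\geq 1>\sqrt{1-\epsilon^2}$, and for $\lambda<0$ the multiplier $\eta_2=1+\lambda b(1-s)>1$ yields
\[\|x+\lambda z\|=\eta_2\|x+\lambda_2 v_1\|\geq\eta_2\sqrt{1-\epsilon^2}>\sqrt{1-\epsilon^2}.\]
Since $\|x+\lambda z\|\to\infty$ as $|\lambda|\to\infty$ and equals $1$ at $\lambda=0$, the infimum over $\lambda$ is attained at some finite $\lambda^\ast$, at which $\phi(z)>\sqrt{1-\epsilon^2}$, as required.

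The main obstacle will be exactly this last step: upgrading the non-strict inequality $\phi\geq\sqrt{1-\epsilon^2}$ valid throughout $F(x,\epsilon)$ to the strict inequality $\phi>\sqrt{1-\epsilon^2}$ on the interior of the cone. The key observation is that $s=1$ in the representation of $z$ through $v_1$ characterises $z$ being a positive multiple of $v_1$; so $s>1$ forces $\eta_2>1$ for every $\lambda<0$, and this quantitative gap survives in the infimum because the minimiser cannot approach $\lambda=0$ (where $\|x\|=1>\sqrt{1-\epsilon^2}$) nor $\pm\infty$.
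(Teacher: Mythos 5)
Your proposal is correct and follows essentially the same route as the paper: decompose $S_{\mathbb{X}}$ via the cone from Theorem \ref{theorem:F(x,epsilon)}, note $\phi<\sqrt{1-\epsilon^2}$ off $F(x,\epsilon)$, and use the representation $z=b\{(1-s)x+sv_1\}$ with $s>1$ to force $\eta_2>1$ and hence strict inequality on the open arc between $v_1$ and $v_2$. You additionally make explicit two points the paper leaves implicit --- that $\phi(v_1)=\sqrt{1-\epsilon^2}$ exactly (by continuity of $\phi$ and approximation from $t<t_1$) and that the pointwise strict inequality passes to the infimum because the infimum is attained --- which is a welcome tightening but not a different proof.
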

\begin{proof}
First, let $\epsilon \in (0,1)$. Note that every element in $S_{\mathbb{X}}$ can be written as some  multiple of a convex combination of $v_2,y$ or $y ,v_1$ or $v_1, x$ or $x,-v_2$. From Theorem \ref{theorem:F(x,epsilon)}, it is clear that $\inf_{\lambda}\|x+ \lambda z\|< \sqrt{1-{\epsilon}^2}$ if $z $ is some  multiple of a convex combination of $v_1,x$ or $x,-v_2$.  Now, suppose $z= \frac{(1-t)y + t v_1}{\|(1-t)y+ t v_1\|}$ for some $0\leq t < 1$. Let $\lambda < 0$. Write $z=a \{(1-s)x+ s v_1\}$, where $s>1$ and $a= \frac{1}{\|(1-s)x+ s v_1\|}> 0$. Let $\eta= 1+ a \lambda (1-s)> 1$ and $\lambda ' = \frac{a \lambda s}{\eta}$. Then $x+\lambda z = \eta\{x+ \lambda ' v_1\} \Rightarrow \|x+ \lambda z\|> \|x + \lambda' v_1\|\geq \sqrt{1-{\epsilon}^2}$. Now, let $\lambda \geq 0$. Write $z=b\{(1-s')x+s' y\}$, where $0< s'\leq 1$ and $b=\frac{1}{\|(1-s')x+ s' y\|}$. Let $\eta_1= 1+ \lambda b (1-s') \geq 1$ and $\lambda_1= \frac{\lambda b s'}{\eta_1}$. Then $x+\lambda z= \eta_1\{x+ \lambda_1 y\} \Rightarrow \|x+ \lambda z\|\geq \|x+ \lambda_1 y\|\geq 1 > \sqrt{1-{\epsilon}^2}$. Therefore, $\inf_{\lambda \in \mathbb{R}} \|x+ \lambda z\| > \sqrt{1-{\epsilon}^2}$. Similarly,  if $z=\frac{(1-t)y + t v_2}{\|(1-t)y + t v_2\|}$ for some $0\leq t< 1$ then we can show that   $\inf_{\lambda}\|x+ \lambda z\|> \sqrt{1-{\epsilon}^2}$. Thus, $ z \in S(x, \epsilon)$  if and only if  $z \in  \{\pm v_1, \pm v_2\} $.\\
The second part of the theorem is obvious.
\end{proof}

Let $P_{x, y}(\epsilon)$ denote the restriction of $ F(x, \epsilon) $  to the subspace spanned by $x$ and $y$.  Equipped with the characterization of $ F(x, \epsilon) $ in two-dimensional Banach spaces, it is now possible to obtain a complete description of $ F(x, \epsilon) $ in any normed space.   The following theorem, the proof of which is immediate, illustrates our claim.

\begin{theorem}\label{theorem:any dimension}
Let $\mathbb{X}$ be a normed space. Let $x \in \mathbb{X}$ and $\epsilon \in [0,1)$. Then $F(x, \epsilon)= \bigcup_{y\in \mathbb{X}}P_{x,y}(\epsilon)$. In particular, $ F(x, \epsilon) $ is a union of two-dimensional normal cones.
\end{theorem}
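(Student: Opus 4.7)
The plan is to prove the set equality $F(x,\epsilon) = \bigcup_{y\in\mathbb{X}} P_{x,y}(\epsilon)$ directly from the definitions, and then invoke Theorem \ref{theorem:F(x,epsilon)} inside each two-dimensional slice. The inclusion $\bigcup_y P_{x,y}(\epsilon) \subseteq F(x,\epsilon)$ is immediate, since by definition $P_{x,y}(\epsilon) = F(x,\epsilon) \cap \mathrm{span}\{x,y\}$, which is contained in $F(x,\epsilon)$. For the reverse inclusion, I would take any $z \in F(x,\epsilon)$ and simply observe that $z \in \mathrm{span}\{x,z\}$, so $z \in F(x,\epsilon) \cap \mathrm{span}\{x,z\} = P_{x,z}(\epsilon)$, which places $z$ in the union.

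For the ``in particular'' assertion, the plan is to apply Theorem \ref{theorem:F(x,epsilon)} within each two-dimensional subspace $\mathrm{span}\{x,y\}$ (restricting attention to $y$ linearly independent from $x$, since otherwise the span is one-dimensional). The approximate $\epsilon$-Birkhoff-James orthogonality condition $\|x+\lambda y\| \geq \sqrt{1-\epsilon^2}\|x\|$ depends only on the norm inherited from $\mathbb{X}$, so Theorem \ref{theorem:F(x,epsilon)} applied in the normed space $\mathrm{span}\{x,y\}$ yields a normal cone $K_y$ in that subspace such that $P_{x,y}(\epsilon) = K_y \cup (-K_y)$. Combining with the first part gives $F(x,\epsilon) = \bigcup_y \bigl(K_y \cup (-K_y)\bigr)$, exhibiting $F(x,\epsilon)$ as a union of two-dimensional normal cones.

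There is essentially no serious obstacle, consistent with the authors' remark that the proof is immediate; the only subtlety is bookkeeping for the degenerate directions. Specifically, the zero vector always lies in $F(x,\epsilon)$ and belongs to every $P_{x,y}(\epsilon)$, while a nonzero scalar multiple $\alpha x$ is never in $F(x,\epsilon)$ (choosing $\lambda = -1/\alpha$ forces $\|x+\lambda\alpha x\| = 0$), so the union may safely be taken over $y$ linearly independent from $x$ without losing any element of $F(x,\epsilon)$. Once this is noted, the decomposition into two-dimensional normal cones follows with no further work.
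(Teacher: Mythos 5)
Your proposal is correct and matches the paper's intent exactly: the paper declares the proof immediate, and your argument (the trivial set equality plus an application of Theorem \ref{theorem:F(x,epsilon)} inside each two-dimensional subspace $\mathrm{span}\{x,y\}$) is precisely the immediate argument being invoked. The bookkeeping you add for degenerate directions is a reasonable small refinement but does not change the approach.
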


Let us now turn our attention to the converse of Theorem $ 2.1. $ As promised in the introduction, we prove that for any normal cone $ K $ in a two-dimensional smooth Banach space $ \mathbb{X}, $ there exists some $ x \in S_{\mathbb{X}} $ and some $ \epsilon \in [0,1) $ such that $ K \cup (-K)= F(x, \epsilon). $

\begin{theorem}\label{theorem:cone3}
Let $\mathbb{X}$ be a two-dimensional smooth Banach space. Let $K$ be a normal cone in $\mathbb{X}$. Then there exists $x \in S_{\mathbb{X}}$ and $\epsilon \in [0,1)$ such that $F(x, \epsilon)= K \cup (-K)$.
\end{theorem}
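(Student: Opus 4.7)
The plan is geometric. For $\|x\| = 1$ and $y \neq 0$ one has $\inf_{\lambda \in \mathbb{R}} \|x + \lambda y\| = d(x, \mathrm{span}\{y\})$, so $y \in F(x, \epsilon)$ precisely when $d(x, \mathrm{span}\{y\}) \geq \sqrt{1 - \epsilon^2}$. Combined with Theorem \ref{theorem:F(x,epsilon)}, the two extreme generators of the normal cone arising as $F(x, \epsilon)$ are the unit vectors whose spans are the two lines through the origin tangent to the ball of radius $\sqrt{1 - \epsilon^2}$ centered at $x$. Thus the task reduces to: given $v_1, v_2 \in S_{\mathbb{X}}$, produce $x \in S_{\mathbb{X}}$ and $r \in (0, 1]$ with $d(x, \mathrm{span}\{v_1\}) = d(x, \mathrm{span}\{v_2\}) = r$, and with the $B$-orthogonal direction of $x$ lying inside $K$ (to force the correct pairing of generators).

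The degenerate case $v_1 = v_2$ is immediate: set $\epsilon = 0$ and use continuity of the duality map $x \mapsto f_x$ on $S_{\mathbb{X}}$ (a consequence of smoothness) together with $f_{v_1}(v_1) = 1$ and $f_{-v_1}(v_1) = -1$ to obtain, via the intermediate value theorem on the connected sphere $S_{\mathbb{X}}$, some $x \in S_{\mathbb{X}}$ with $x \perp_B v_1$. Then $F(x, 0) = \mathrm{span}\{v_1\} = K \cup (-K)$.

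Now assume $v_1 \neq \pm v_2$. Let $y_0(x)$ denote the $B$-perpendicular direction to $x$, unique up to sign in the smooth case; the assignment $x \mapsto y_0(x)$ is continuous on $S_{\mathbb{X}}$. Consider the arc $A \subset S_{\mathbb{X}}$ consisting of those $x$ for which $y_0(x) \in K$; its endpoints $x_1, x_2$ satisfy $y_0(x_1) = v_1$ and $y_0(x_2) = v_2$. Define $g: S_{\mathbb{X}} \to \mathbb{R}$ by
\[
g(x) = d(x, \mathrm{span}\{v_1\}) - d(x, \mathrm{span}\{v_2\}).
\]
Since $x_1 \perp_B v_1$ one has $d(x_1, \mathrm{span}\{v_1\}) = 1$, while $v_1 \neq \pm v_2$ together with uniqueness of the $B$-perpendicular (smoothness) forces $d(x_1, \mathrm{span}\{v_2\}) < 1$, giving $g(x_1) > 0$; symmetrically $g(x_2) < 0$. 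Continuity of $g$ on the connected arc $A$ yields $x^* \in A$ with $g(x^*) = 0$. Setting $r^* = d(x^*, \mathrm{span}\{v_1\}) \in (0, 1)$ and $\epsilon^* = \sqrt{1 - (r^*)^2} \in (0, 1)$, the lines $\mathrm{span}\{v_1\}$ and $\mathrm{span}\{v_2\}$ are precisely the two tangent lines through the origin to the ball $B(x^*, r^*)$. By Theorem \ref{theorem:F(x,epsilon)}, the extreme generators of $F(x^*, \epsilon^*)$ lie in $\{\pm v_1, \pm v_2\}$, and the condition $y_0(x^*) \in K$ pins down the correct pairing, giving $F(x^*, \epsilon^*) = K \cup (-K)$.

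The principal technical concern is orientation: the equation $g = 0$ can have extraneous solutions $x \in S_{\mathbb{X}}$ for which $y_0(x)$ lies outside $K$, and for such $x$ the same two tangent lines produce the other bowtie, whose generators pair as $\{v_1, -v_2\}$ and $\{-v_1, v_2\}$ rather than $\{v_1, v_2\}$ and $\{-v_1, -v_2\}$. Restricting the intermediate value argument from the outset to the arc $A$, and invoking the continuity of $y_0$ available in any smooth two-dimensional Banach space, circumvents this pitfall. Smoothness is essential both for defining the continuous $B$-perpendicular map and for ruling out the degenerate possibility $r^* = 1$ (which would force $v_1 = \pm v_2$).
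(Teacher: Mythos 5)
Your strategy is genuinely different from the paper's. The paper never touches the orthogonality map $x \mapsto y_0(x)$: it introduces the sets $W_1=\{x\in S_{\mathbb{X}}: \inf_\lambda\|x+\lambda v_1\|>\inf_\lambda\|x+\lambda v_2\|\}$ and $W_2$ (with the roles reversed), shows each is open, nonempty and they are disjoint, and uses connectedness of $S_{\mathbb{X}}$ to produce a point where the two distances $d(x,\mathrm{span}\{v_1\})$ and $d(x,\mathrm{span}\{v_2\})$ coincide; smoothness rules out the common value being $1$, and Theorems \ref{theorem:F(x,epsilon)} and \ref{theorem:solution} finish. Your intermediate value argument on $g$ reaches the same equalization point by a different route, and your explicit attention to the orientation issue (forcing $x^*$ to lie outside $K\cup(-K)$, equivalently $y_0(x^*)\in \mathrm{int}\,K$) is if anything more careful than the paper's ``choose $x\in S_{\mathbb{X}}\setminus(W_1\cup W_2)$'', since that complement also meets the arc of $K$ between $v_1$ and $v_2$, where the resulting $F(x,\epsilon)$ is the complementary bowtie.

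The genuine gap is the sentence ``Consider the arc $A\subset S_{\mathbb{X}}$ consisting of those $x$ for which $y_0(x)\in K$; its endpoints $x_1,x_2$ satisfy $y_0(x_1)=v_1$ and $y_0(x_2)=v_2$.'' This asserts, without proof, (i) that a continuous global selection $y_0:S_{\mathbb{X}}\to S_{\mathbb{X}}$ of the sign-ambiguous orthogonal direction exists (plausible via norm-continuity of the duality map in a finite-dimensional smooth space, but a consistent choice of unit vector in $\ker f_x$ must actually be exhibited); (ii) that $v_1$ and $v_2$ are attained by $y_0$; and, most seriously, (iii) that $\{x: y_0(x)\in K\}$ is connected. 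The preimage of an arc under a continuous circle map need not be an arc; connectedness here really rests on the monotone rotation of supporting lines as $x$ traverses $S_{\mathbb{X}}$, and a smooth two-dimensional space need not be strictly convex, so $y_0$ can stall on nondegenerate arcs --- nothing you wrote excludes oscillation. Without (iii), the intermediate value theorem hands you a zero of $g$ somewhere on $S_{\mathbb{X}}$, but possibly one with $y_0(x^*)\notin K$, which yields precisely the wrong pairing of generators. These facts are all true and provable, but they carry the entire weight of your argument and cannot be taken for granted; the paper's $W_1/W_2$ device is notable exactly because it sidesteps every one of them.
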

\begin{proof}
Let the cone $K$ be determined by $v_1, ~v_2 \in S_{\mathbb{X}}$. Note that since $ K $ is a normal cone, $ v_1 \neq -v_2. $ First, let us assume that $ v_1 \neq v_2. $ Consider the two sets
\[W_1= \{x\in S_{\mathbb{X}}:~ \inf_{\lambda}\|x+ \lambda v_1\|> \inf_{\lambda}\|x+ \lambda v_2\|\}, \]
\[W_2= \{x\in S_{\mathbb{X}}:~ \inf_{\lambda}\|x+ \lambda v_2\|> \inf_{\lambda}\|x+ \lambda v_1\|\}. \]
Clearly, $W_1 \neq \emptyset$ and $W_2 \neq \emptyset$, since $v_2 \in W_1$ and $v_1 \in W_2$. We show that $W_1$ is open. Let $z \in W_1$. Let $l_1= \inf_{\lambda}\|z+ \lambda v_1\|> \inf_{\lambda}\|z+ \lambda v_2\|= l_2$. Choose $\epsilon' >0$ such that $\epsilon' < \frac{l_1- l_2}{2}$. We claim that $B(z, \epsilon') \cap S_{\mathbb{X}} \subseteq W_1$. By standard compactness argument, there exists $\lambda_0$ such that $\|z+ \lambda_0 v_2\|= l_2$. Now, suppose that $w \in B(z, \epsilon') \cap S_{\mathbb{X}}$. Then for any  $\lambda \in \mathbb{R}$, we have, 
\begin{eqnarray*}
            \|w +\lambda v_1\| & = & \|(z+\lambda v_1)-(z- w)\| \\
							                 & \geq & \|z+\lambda v_1\|- \|z-w\| \\
                                &> & l_1- \epsilon'
\end{eqnarray*}
Also,  
\begin{eqnarray*}
            \|w+ \lambda_0 v_2\| & = &\|(z+ \lambda_0 v_2)+(w-z)\| \\
                                 & \leq & \|z+ \lambda_0 v_2\| +\|w-z\| \\
																& < & l_2+ \epsilon' 
\end{eqnarray*}																
Therefore, $\inf_{\lambda}\|w+\lambda v_1\| > l_1 -\epsilon' > l_2+ \epsilon' > \inf_{\lambda} \|w+\lambda v_2\|$. This implies that $w \in W_1$. Hence, $B(z, \epsilon') \cap S_{\mathbb{X}} \subseteq W_1$. Therefore, $W_1$ is open in $S_{\mathbb{X}}$. Similarly it can be shown that $W_2$ is open. Again, $W_1 \cap W_2 = \emptyset$. Since $S_{\mathbb{X}}$ is connected, $W_1 \cup W_2 \neq S_{\mathbb{X}}$. Choose $x \in S_{\mathbb{X}}\setminus (W_1 \cup W_2)$. Then clearly $\inf_{\lambda}\|x+ \lambda v_1\|= \inf_{\lambda}\|x+ \lambda v_2\|$. We would like to remark that since $\mathbb{X}$ is smooth, and $ v_1 \neq \pm v_2, $ $ x $ can not be Birkhoff-James orthogonal to both $ v_1 $ and $ v_2. $ Since $\inf_{\lambda}\|x+ \lambda v_1\|= \inf_{\lambda}\|x+ \lambda v_2\|,$ it is now easy to see that $ x $ is not Birkhoff-James orthogonal to either of $ v_1, v_2. $ Therefore, there exists $\epsilon \in (0,1)$ such that $\inf_{\lambda}\|x+ \lambda v_1\|= \inf_{\lambda}\|x+ \lambda v_2\|= \sqrt{1-{\epsilon}^2}$. Now, by Theorem \ref{theorem:F(x,epsilon)} and Theorem \ref{theorem:solution}, it is clear that $F(x, \epsilon)=K \cup (-K)$.\\
Let us now consider the case $ v_1 = v_2. $ Clearly, $ K $ is simply a half-line in this case, given by $ K = \{\lambda v_1 : \lambda \geq 0\}. $ By Theorem 2.3 of James \cite{J}, there exists $ x \in S_{\mathbb{X}} $ such that $ x \perp_{B} v_1(=v_2). $ Furthermore, since $ \mathbb{X} $ is smooth, if $ y \in S_{\mathbb{X}} $ is such that $ x \perp_{B} y, $ then $ y = \pm v_1. $ Therefore, by choosing $ \epsilon = 0, $ it is now immediate that $F(x, \epsilon)=K \cup (-K)$. This establishes the theorem.
\end{proof}
 
In the following example, we illustrate the fact that the smoothness assumption in Theorem $ 2.4 $ can not be dropped.

\begin{example}
Let $\mathbb{X}= l_{\infty}^2$. Let $K$ be the normal cone determined by $(-\frac{1}{2},1)$ and $(-1,1)$. If possible, suppose that there exists $x \in S_{\mathbb{X}}$ and $\epsilon \in [0,1)$ such that $F(x, \epsilon)= K \cup (-K)$. Let $x \perp_B z$ and $z\in S_{\mathbb{X}}$. Then $z\in K \cup (-K)$. Without loss of generality assume that $z \in K$. Then $z= (1-t)(-\frac{1}{2},1)+ t (-1,1)$ for some $t \in [0,1]$. Then clearly $x= \pm (1,1)$. Since $ x \bot_B (-1,0)$, we must have, $(-1,0)\in F(x, \epsilon)$. But $(-1,0) \notin K \cup (-K)$. This contradiction proves that there does not exist any $x \in S_{\mathbb{X}}$ and $\epsilon \in [0,1)$ such that $F(x, \epsilon)= K \cup (-K)$.
\end{example}

In the next theorem, we prove a uniqueness result for $ F(x, \epsilon) $ in a two-dimensional Banach space for $ \epsilon \in (0,1).$ 

\begin{theorem}\label{theorem:F(x_1,e_1)=F(x_2,e_2)}
Let $\mathbb{X}$ be a two-dimensional Banach space. Let $x_1, x_2 \in S_{\mathbb{X}}$ and $\epsilon_1, \epsilon_2 \in (0,1)$. If $F(x_1, \epsilon_1)= F(x_2, \epsilon_2)$ then $x_1 = \pm x_2$ and $\epsilon_1= \epsilon_2$.
\end{theorem}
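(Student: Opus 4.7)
My strategy is to extract from the hypothesis $F(x_1, \epsilon_1) = F(x_2, \epsilon_2)$ a common pair of ``edge directions'' $\{\pm v_1, \pm v_2\} \subseteq S_{\mathbb{X}}$, and then to pin down each $x_j$ up to sign as the unique point on a certain arc of $S_{\mathbb{X}}$ where $\inf_{\lambda} \|x + \lambda v_1\|$ and $\inf_{\lambda} \|x + \lambda v_2\|$ agree. Concretely, I first apply Theorem \ref{theorem:F(x,epsilon)} to write $F(x_j, \epsilon_j) = K_j \cup (-K_j)$ for normal cones $K_j$. Since $K_j \cap (-K_j) = \{\theta\}$ while each $K_j \setminus \{\theta\}$ is path-connected (a two-dimensional wedge with its apex removed), the set $(K_j \cup (-K_j))\setminus\{\theta\}$ has exactly two connected components, and matching components on the two sides of the hypothesis forces $K_1 = K_2$ or $K_1 = -K_2$. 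In either case there exist common edge vectors $v_1, v_2 \in S_{\mathbb{X}}$ generating the cone, and Theorem \ref{theorem:solution} identifies $S(x_j, \epsilon_j) = \{\pm v_1, \pm v_2\}$, yielding the key identities
\[
 \inf_{\lambda \in \mathbb{R}} \|x_j + \lambda v_1\| \;=\; \inf_{\lambda \in \mathbb{R}} \|x_j + \lambda v_2\| \;=\; \sqrt{1-\epsilon_j^2}, \qquad j = 1,2,
\]
while Remark \ref{remark:plusminus} (using $\epsilon_j > 0$) guarantees $v_1 \neq \pm v_2$, so that $\{v_1, v_2\}$ is a basis of $\mathbb{X}$.

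Next, to locate $x_j$, I observe that $\inf_\lambda \|x_j + \lambda x_j\| = 0 < \sqrt{1-\epsilon_j^2}$ forces $x_j \notin K_j \cup (-K_j)$, so in the expansion $x_j = a_j v_1 + b_j v_2$ the coefficients $a_j, b_j$ must have opposite signs. Thus $x_j$ lies on the arc $A := \{\alpha v_1 - \beta v_2 : \alpha, \beta > 0,\, \|\alpha v_1 - \beta v_2\|=1\}$ or on $-A$. A short substitution argument (reparametrizing $\lambda$ to absorb the relevant coefficient and then factoring out the other one) yields, for $x = \alpha v_1 - \beta v_2 \in A$,
\begin{align*}
 \inf_{\lambda \in \mathbb{R}} \|x + \lambda v_1\| &= \beta \cdot d(v_2,\, \mathrm{Lin}\{v_1\}), \\
 \inf_{\lambda \in \mathbb{R}} \|x + \lambda v_2\| &= \alpha \cdot d(v_1,\, \mathrm{Lin}\{v_2\}),
\end{align*}
with both distances strictly positive by the linear independence of $v_1, v_2$. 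The equality of these two infimums therefore determines the ratio $\beta/\alpha$, and together with the normalization $\|\alpha v_1 - \beta v_2\| = 1$ this specifies a \emph{unique} point on $A$.

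Applying this uniqueness completes the proof: if $x_1$ and $x_2$ lie on the same arc (either both in $A$ or both in $-A$) then $x_1 = x_2$; if they lie on opposite arcs, applying the uniqueness to $x_1$ and $-x_2 \in A$ --- using that both infimums are invariant under $x \mapsto -x$ --- gives $x_1 = -x_2$. In either case $x_1 = \pm x_2$, whereupon the infimum formula immediately yields $\sqrt{1-\epsilon_1^2} = \sqrt{1-\epsilon_2^2}$, i.e.\ $\epsilon_1 = \epsilon_2$. I anticipate the most delicate step to be the derivation of the two explicit infimum formulas on $A$ and the ensuing pointwise uniqueness, since this is the place where the set-theoretic hypothesis is actually converted into a genuine identification of $x_1$ with $\pm x_2$, leveraging both the two-dimensional linear structure and the linear independence of $v_1, v_2$.
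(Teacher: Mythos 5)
Your proposal is correct, and it takes a genuinely different route from the paper's proof. The paper argues in two separate steps: first it treats the equal-$\epsilon$ case, writing $y$ as a normalized combination of $x$ and $\mp v_2$ and deriving $x=\pm y$ by a contradiction that compares the scaling factors $k$ and $k_1$ relating the norm-attaining points $x+\beta v_2$ and $y+\mu v_1$; then, in a second step, it proves $\epsilon_1=\epsilon_2$ by expressing $x_2$ as a normalized combination of $x_1$ with $v_1$ and with $v_2$ and squeezing $\sqrt{1-\epsilon_2^2}$ between $\sqrt{1-\epsilon_1^2}$ from both sides, with some bookkeeping via Remark \ref{remark:plusminus} about which of $v_1,v_2$ lies in $x_1^{+}$ and which in $x_1^{-}$. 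You instead take the identities $\inf_{\lambda}\|x_j+\lambda v_i\|=\sqrt{1-\epsilon_j^2}$ supplied by Theorem \ref{theorem:solution} and convert them, via the coordinate expansion $x_j=\alpha v_1-\beta v_2$, into the explicit formulas $\beta\, d(v_2,\mathrm{Lin}\{v_1\})=\alpha\, d(v_1,\mathrm{Lin}\{v_2\})$; since this single linear condition (which does not involve $\epsilon_j$) together with the normalization pins down a unique point of the arc, both conclusions $x_1=\pm x_2$ and $\epsilon_1=\epsilon_2$ fall out at once. Your approach buys a cleaner and more transparent argument --- the infima are visibly linear in the coordinates, so uniqueness is immediate and the sign case analysis largely disappears --- while the paper's computation stays closer to the convex-combination parametrization used throughout its earlier proofs. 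The auxiliary facts you invoke all check out: $K_j\setminus\{\theta\}$ and $-K_j\setminus\{\theta\}$ are the two components of $F(x_j,\epsilon_j)\setminus\{\theta\}$ (so the cones match up to sign), $v_1\neq\pm v_2$ for $\epsilon_j>0$, and $x_j\notin K\cup(-K)$ since $\|x_j-x_j\|=0<\sqrt{1-\epsilon_j^2}$, which forces the opposite signs of the coordinates.
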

\begin{proof}
We prove the theorem in the following two steps. \\
\noindent \textbf{Step 1:} If $F(x,\epsilon)=F(y,\epsilon)$ for some $ x, y \in S_{\mathbb{X}}, $  then $x=\pm y$. \\
Let $F(x,\epsilon)=F(y,\epsilon)=K \cup (-K),$ where $K$ is the normal cone determined by $v_1,v_2 \in S_{\mathbb{X}}$. It is clear from the proof of the Theorem \ref{theorem:F(x,epsilon)} that $x$ is either in the cone determined by $v_1,-v_2$ or in the cone determined by $-v_1,v_2$. Moreover,  the same is true for $y$ also. Assume that $x$ is in the cone determined by $v_1,-v_2$ so that $x=a\{(1-t_1)v_1 - t_1 v_2\}$ for some $t_1 \in (0,1)$ and $a >0$. Suppose $y$ is in the cone determined by $v_1,-v_2$ and $y=b\{(1-t)x - t v_2\}$ where $b> 0$ and $0 \leq t \leq 1$. Clearly $t \neq 1$. If possible, suppose that $t \neq 0$. Let $(x+ \beta v_2) \perp_B v_2$ and $(y+ \mu v_1)\perp_B v_1$. Then from the Theorem \ref{theorem:solution}, it is clear that $\|x+ \beta v_2\|=\|y+ \mu v_1\|=\sqrt{1-{\epsilon}^2}$. Clearly, $x+ \beta v_2= k(y + \lambda v_2)$ for some scalars $k, \lambda$. Thus, $x + \beta v_2= k b(1-t)x -kbt v_2 + k \lambda v_2 \Rightarrow k= \frac{1}{b(1-t)}$. If $k> 1$ then $\sqrt{1-{\epsilon}^2}=\|x+ \beta v_2\|=|k| \|y+ \lambda v_2\|> \|y+ \lambda v_2\|\geq \sqrt{1-{\epsilon}^2}$, a contradiction. Therefore, $ k= \frac{1}{b(1-t)}\leq 1$. Again, $y+ \mu v_1= k_1(x+ \lambda_1 v_1)$ for some scalar $k_1, \lambda_1$. Therefore, $b(1-t)x-b t[\frac{1-t_1}{t_1}v_1 - \frac{1}{at_1}x]+ \mu v_1 = k_1 x + k_1 \lambda_1 v_1 \Rightarrow k_1= b(1-t)+ \frac{bt}{at_1} \Rightarrow k_1 > 1$. Therefore, $\sqrt{1-{\epsilon}^2}= \|y + \mu v_1\|= |k_1|\|x + \lambda_1 v_1\|> \|x+ \lambda_1 v_1\|\geq \sqrt{1-{\epsilon}^2}$, a contradiction. Hence we must have $t=0$. However, since $ x, y \in S_{\mathbb{X}}, $ this implies that $ x=y. $ Similarly, $y= b\{(1-t)x +tv_1\}$ for some $b>0$ implies that $t=0$ and once again we have, $y=x$. On the other hand,  if $y$ is in the cone determined by $-v_1,v_2$ then it can be shown using similar arguments that $x = - y.$\\
\noindent \textbf{Step 2:} Claim $F(x_1, \epsilon_1)= F(x_2,\epsilon_2)\Rightarrow \epsilon_1= \epsilon_2$ . \\
Let $F(x_1, \epsilon_1)= F(x_2,\epsilon_2)= K \cup (-K)$, where $K$ is the normal cone determined by $v_1,v_2 \in S_{\mathbb{X}}$. Then by Theorem \ref{theorem:solution}, it is clear that $\inf_{\lambda \in \mathbb{R}}\|x_2 + \lambda v_1\|=\inf_{\lambda \in \mathbb{R}}\|x_2 + \lambda v_2\|=\sqrt{1-{\epsilon_2}^2}$ and $\inf_{\lambda \in \mathbb{R}}\|x_1 + \lambda v_1\|= \inf_{\lambda \in \mathbb{R}}\|x_1 + \lambda v_2\|=\sqrt{1-{\epsilon_1}^2}$. Therefore, if $x_1= \pm x_2$ then $\epsilon_1= \epsilon_2$ and we are done. Hence assume that $x_1 \neq \pm x_2$ . Clearly $x_1,~x_2 \notin K \cup (-K)$.
From remark \ref{remark:plusminus}, we have,  either $v_1 \in {x_1}^+, ~ v_2 \in {x_1}^- $ or $v_1 \in {x_1}^-, ~v_2 \in {x_1}^+$. Without loss of generality, assume that $v_1 \in {x_1}^+, ~v_2 \in {x_1}^- $. First suppose that $x_2= \frac{(1-t)x_1+ t v_1}{\|(1-t)x_1+ t v_1\|}$ for some $0<t<1$. Then it can be easily verified that $v_1 \in {x_2}^+, ~v_2 \in {x_2}^- $. Now, $1= \|\frac{(1-t)x_1+ t v_1}{\|(1-t)x_1+ t v_1\|}\|=\frac{1-t}{\|(1-t)x_1+ t v_1\|}\|x_1+\frac{t}{1-t}v_1\| \geq \frac{1-t}{\|(1-t)x_1+ t v_1\|}$, since $v_1 \in {x_1}^+ $. Now for any $\lambda \in \mathbb{R}$, $\|x_2+ \lambda v_1\|= \|\frac{(1-t)x_1+ t v_1}{\|(1-t)x_1+ t v_1\|} + \lambda v_1\|=\frac{1-t}{\|(1-t)x_1+ t v_1\|}\|x_1+ (\frac{t}{1-t}+ \frac{\lambda \|(1-t)x_1+ t v_1\|}{1-t} )v_1\| \Rightarrow \inf_{\lambda \in \mathbb{R}}\|x_2 + \lambda v_1\|= \frac{1-t}{\|(1-t)x_1+ t v_1\|}\inf_{\lambda \in \mathbb{R}}\|x_1 + \lambda v_1\| \leq \inf_{\lambda \in \mathbb{R}}\|x_1 + \lambda v_1\| \Rightarrow \sqrt{1- {\epsilon_2}^2} \leq \sqrt{1- {\epsilon_1}^2}$.  \\
We can write $x_2 = \frac{(1-t)x_1+ t v_2}{\|(1-t)x_1+ t v_2\|}$ for some $0< t < 1$. This implies that $x_1= \frac{\|(1-t)x_1+ t v_2\|}{1-t}x_2- \frac{t}{1-t}v_2 \Rightarrow 1= \frac{\|(1-t)x_1+ t v_2\|}{1-t}\|x_2- \frac{t}{\|(1-t)x_1+ t v_2\|}v_2\|\geq \frac{\|(1-t)x_1+ t v_2\|}{1-t}$, since $v_2 \in {x_1}^-$. Therefore, for all $\lambda \in \mathbb{R}$, $x_1 + \lambda v_2= \frac{\|(1-t)x_1+ t v_2\|}{1-t}x_2- \frac{t}{1-t}v_2 + \lambda v_2 \Rightarrow \inf_{\lambda \in \mathbb{R}}\|x_1 + \lambda v_2\|= \frac{\|(1-t)x_1+ t v_2\|}{1-t}\inf_{\lambda \in \mathbb{R}}\|x_2 + \lambda v_2\| \leq \inf_{\lambda \in \mathbb{R}}\|x_2 + \lambda v_2\|\Rightarrow \sqrt{1- {\epsilon_1}^2} \leq \sqrt{1- {\epsilon_2}^2}$. Therefore, $\epsilon_1= \epsilon_2$. Similarly, if we assume that $x_2= \frac{(1-t)x_1-tv_2}{\|(1-t)x_1-tv_2\|}, $ for some $0<t<1$, then we can apply similar arguments to prove that  $ \epsilon_1= \epsilon_2$.  \\
This completes the proof of the theorem.
\end{proof}
\begin{remark}
For $ \epsilon = 0 ,$   $F(x_1,0)= F(x_2,0) \Rightarrow x_1 = \pm x_2 $ holds if the space is strictly convex . Suppose    $F(x_1,0)= F(x_2,0). $ Then there exists $z $ such that $ x_1 \bot_B z $ and $ x_2 \bot_B z.$ Being strictly convex, Birkhoff-James orthogonality is left unique and so $ x_1 = \pm x_2.$ 
\end{remark}
\begin{remark}
It follows from Theorem $ 2.4 $ and Theorem $ 2.5 $ that given any normal cone $ K $ in a two-dimensional smooth Banach space $ \mathbb{X}, $ there exists a unique (upto multiplication by $ \pm 1 $) $ x \in S_{\mathbb{X}} $ and a unique $ \epsilon \in [0, 1) $ such that $ F(x, \epsilon)=K \cup (-K). $ This observation strengthens Theorem $ 2.4 $ to a considerable extent.
\end{remark}

Now we are in a position to prove the promised uniqueness theorem for $ F(x, \epsilon) $ in any normed space.

\begin{theorem}\label{theorem:F(x_1,e_1)=F(x_2,e_2)2}
Let $\mathbb{X}$ be a normed space. Let $x_1, x_2 \in S_{\mathbb{X}}$ and $\epsilon_1, \epsilon_2 \in (0,1)$. If $F(x_1, \epsilon_1)= F(x_2, \epsilon_2)$ then $x_1 = \pm x_2$ and $\epsilon_1= \epsilon_2$.
\end{theorem}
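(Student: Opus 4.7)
The plan is to reduce the result to the two-dimensional uniqueness theorem (Theorem \ref{theorem:F(x_1,e_1)=F(x_2,e_2)}) by restriction to suitably chosen two-dimensional subspaces. The enabling observation is elementary: for any unit vector $x$ and any two-dimensional subspace $Y \ni x$, the intersection $F(x, \epsilon) \cap Y$ coincides with the analogously-defined set computed intrinsically inside the two-dimensional Banach space $(Y, \|\cdot\|)$. This is immediate from the definition, because the condition $\|x + \lambda y\| \geq \sqrt{1-\epsilon^2}$ only involves vectors lying in $\mathrm{span}\{x, y\}$. In the notation of Theorem \ref{theorem:any dimension}, this restriction is nothing but $P_{x, y'}(\epsilon)$ for any $y' \in Y$ linearly independent from $x$.

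First I would dispense with the case of linearly independent $x_1, x_2$. Suppose for contradiction that $x_1$ and $x_2$ are linearly independent and set $Y = \mathrm{span}\{x_1, x_2\}$. Intersecting both sides of $F(x_1, \epsilon_1) = F(x_2, \epsilon_2)$ with $Y$ yields $F_Y(x_1, \epsilon_1) = F_Y(x_2, \epsilon_2)$, where $F_Y$ denotes the corresponding set computed inside the two-dimensional Banach space $Y$. Theorem \ref{theorem:F(x_1,e_1)=F(x_2,e_2)} then forces $x_1 = \pm x_2$, contradicting linear independence. Hence $x_1$ and $x_2$ are linearly dependent, and since both have unit norm, $x_1 = \pm x_2$.

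To obtain $\epsilon_1 = \epsilon_2$, I first note that the defining inequality of $F$ is invariant under $x \mapsto -x$ (replace $\lambda$ by $-\lambda$), so $F(-x, \epsilon) = F(x, \epsilon)$. Without loss of generality I may therefore take $x_1 = x_2$. Assuming $\dim \mathbb{X} \geq 2$ (necessary for the conclusion, as the one-dimensional case is degenerate with $F(x, \epsilon) = \{0\}$ for every $\epsilon$), I pick any $y \in \mathbb{X} \setminus \mathrm{span}\{x_1\}$ and set $Z = \mathrm{span}\{x_1, y\}$. Intersecting $F(x_1, \epsilon_1) = F(x_1, \epsilon_2)$ with $Z$ gives $F_Z(x_1, \epsilon_1) = F_Z(x_1, \epsilon_2)$, and a second application of Theorem \ref{theorem:F(x_1,e_1)=F(x_2,e_2)} inside $Z$ yields $\epsilon_1 = \epsilon_2$.

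The argument is a clean two-stage reduction to Theorem \ref{theorem:F(x_1,e_1)=F(x_2,e_2)} via the restriction principle, so no genuine obstacle remains beyond the two-dimensional result already in hand. The only point requiring verification is the compatibility of $F$ with passage to subspaces, and this is built directly into the definition of Birkhoff-James $\epsilon$-orthogonality.
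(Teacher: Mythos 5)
Your proposal is correct and is essentially the paper's own argument: the paper's proof is a one-line reduction citing Theorem \ref{theorem:any dimension} (the restriction principle $F(x,\epsilon)=\bigcup_y P_{x,y}(\epsilon)$) together with the two-dimensional uniqueness result of Theorem \ref{theorem:F(x_1,e_1)=F(x_2,e_2)}, exactly as you do. Your explicit two-stage reduction (first forcing $x_1=\pm x_2$ via $\mathrm{span}\{x_1,x_2\}$, then $\epsilon_1=\epsilon_2$ via any two-dimensional subspace containing $x_1$) simply spells out the details the paper leaves implicit, and your remark about the degenerate one-dimensional case is a sensible addition.
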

\begin{proof}
The proof of the theorem essentially follows from Theorem \ref{theorem:any dimension} and Theorem \ref{theorem:F(x_1,e_1)=F(x_2,e_2)}.
\end{proof}

Let us now shift our attention to the structure of $ G(x, \epsilon) $ in normed spaces. The following two lemmas are required to obtain the desired characterization.

\begin{lemma}\label{lemma:connected}
Let $\mathbb{X}$ be a two-dimensional Banach space. Let $(\theta \neq)z \in  \mathbb{X}, ~\epsilon \in [0,1)$.  Then $\overline{B}(z, \epsilon) \cap S_{\mathbb{X}}$ is either empty or path connected. 
\end{lemma}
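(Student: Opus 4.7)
The plan is to show that any two points $a, b \in \overline{B}(z,\epsilon) \cap S_\mathbb{X}$ can be joined by an arc of $S_\mathbb{X}$ lying entirely in $\overline{B}(z,\epsilon)$. Because $\mathbb{X}$ is two-dimensional, $S_\mathbb{X}$ is homeomorphic to $S^1$, so once this is established for every pair of points path-connectedness of the intersection follows immediately. In particular, it is enough to prove that for any two such $a,b$, at least one of the two arcs of $S_\mathbb{X}$ joining them is contained in $\overline{B}(z,\epsilon)$.

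I would argue this by contradiction. If both arcs $\alpha,\beta$ from $a$ to $b$ contain points outside $\overline{B}(z,\epsilon)$, then I pick $p \in \alpha$ and $q \in \beta$ with $\|p - z\| > \epsilon$ and $\|q - z\| > \epsilon$. Because $a,p,b,q$ appear in cyclic order on the convex curve $S_\mathbb{X} = \partial B_\mathbb{X}$, the chords $[a,b]$ and $[p,q]$, both lying in $B_\mathbb{X}$ by convexity, must meet at some interior point $m$. Convexity of $\overline{B}(z,\epsilon)$ forces $[a,b] \subseteq \overline{B}(z,\epsilon)$, so $\|m - z\| \leq \epsilon$. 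Applying the intermediate value theorem to the convex function $t \mapsto \|t p + (1-t)q - z\|$ on $[0,1]$, whose endpoint values exceed $\epsilon$ but which is $\leq \epsilon$ at the interior $t$ giving $m$, I obtain two distinct points $r_1, r_2$ on $[p,q] \cap \partial \overline{B}(z,\epsilon)$, both lying strictly inside $B_\mathbb{X}$.

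The concluding step exploits the homothety $\partial \overline{B}(z,\epsilon) = z + \epsilon\, S_\mathbb{X}$. A disconnected $\overline{B}(z,\epsilon) \cap S_\mathbb{X}$ forces at least four transitions between the inside and outside of $\overline{B}(z,\epsilon)$ along $S_\mathbb{X}$, hence at least four ``transversal'' crossings between $S_\mathbb{X}$ and the homothetic curve $z + \epsilon S_\mathbb{X}$. The hypothesis $\epsilon < 1$ enters here to rule out antipodal pairs of such crossings: if $x, y \in S_\mathbb{X} \cap \partial\overline{B}(z,\epsilon)$ satisfied $y = -x$, the triangle inequality would give $2 = \|2x\| \leq \|x - z\| + \|x + z\| = 2\epsilon$, contradicting $\epsilon < 1$. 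Consequently any two crossing points are non-antipodal, so that their midpoint lies in $\operatorname{int}(B_\mathbb{X})$.

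The main obstacle is precisely the final convex-geometric step: showing that two homothetic centrally symmetric convex curves in $\mathbb{R}^2$ with homothety ratio strictly less than $1$ cannot produce more than two transversal crossings. The plan is to reduce this to the alternation structure of $\partial(B_\mathbb{X} \cap \overline{B}(z,\epsilon))$, a simple closed convex curve that decomposes into arcs lying alternately on $S_\mathbb{X}$ and on $\partial\overline{B}(z,\epsilon)$; the number of alternations equals the number of transitions, and the homothetic structure together with the antipodal-exclusion above allows at most two. This is where the hypotheses $\dim\mathbb{X}=2$ and $\epsilon < 1$ are both essentially used, and is the step that requires the most care to execute rigorously.
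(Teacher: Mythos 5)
Your reduction to ``at least one of the two arcs from $a$ to $b$ lies in $\overline{B}(z,\epsilon)$'' is fine, and the antipodal-exclusion computation ($2=\|(x-z)+(x+z)\|\leq 2\epsilon$) is correct. But the proof does not close: the entire content of the lemma has been pushed into the final step, which you yourself only describe as ``the plan'' --- namely, that $S_{\mathbb{X}}$ and the homothetic curve $z+\epsilon S_{\mathbb{X}}$ cannot cross transversally four or more times in alternating fashion. That claim is essentially a restatement of the lemma (two components of $\overline{B}(z,\epsilon)\cap S_{\mathbb{X}}$ is exactly the same thing as four alternating crossings), so deferring it leaves a genuine gap rather than a routine verification. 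Moreover, the two ingredients you propose to close it with do not suffice: the antipodal exclusion only says that no two crossing points are antipodal, which is perfectly compatible with four pairwise non-antipodal crossings; and the alternation structure of $\partial(B_{\mathbb{X}}\cap\overline{B}(z,\epsilon))$ tells you the arcs of the two boundaries alternate around a convex curve, but gives no a priori bound on the number of alternations. There are also smaller frictions in the contradiction setup when $B_{\mathbb{X}}$ is not strictly convex: if $a,p,b,q$ all lie on a common line segment of $S_{\mathbb{X}}$, the chords $[a,b]$ and $[p,q]$ meet in a subsegment of the boundary rather than at an interior point, and the points $r_1,r_2$ you extract need not lie strictly inside $B_{\mathbb{X}}$.

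The paper avoids all of this by a direct construction that you may want to adopt. Fix $z_0=z/\|z\|$, which lies in $\overline{B}(z,\epsilon)\cap S_{\mathbb{X}}$ whenever that set is nonempty, and for any $y$ in the set consider the radial projection of the straight segment from $y$ to $z$, i.e.\ $v_t=\frac{(1-t)y+tz}{\|(1-t)y+tz\|}$. Writing $u=\frac{y+z}{2}$, so that $\|z-u\|=\|y-u\|\leq\epsilon/2$, a short case analysis (according to whether $(1-t)y+tz$ lies between $z$ and $u$ or between $u$ and $y$) together with the estimate $\bigl|\,1-\|(1-t)y+tz\|\,\bigr|\leq\|y-((1-t)y+tz)\|$ shows that $\|z-v_t\|\leq\epsilon$ for all $t\in[0,1]$. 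Thus every point of $\overline{B}(z,\epsilon)\cap S_{\mathbb{X}}$ is joined to the common point $z_0$ by a path inside the set, and path-connectedness follows with no appeal to crossing numbers at all. I would recommend either carrying out this direct argument or supplying a complete proof of the two-crossings claim; as it stands, the latter is the lemma itself in disguise.
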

\begin{proof}
 Note that $ \overline{B}(z, \epsilon) \cap S_{\mathbb{X}} = \emptyset$  if and only if $\| z - \frac{z}{\|z\|} \| > \epsilon. $  Assume $ \overline{B}(z, \epsilon) \cap S_{\mathbb{X}} \neq \emptyset.$ Let $ z_0 = \frac{z}{\|z\|}.$ Then $ z_0 \in \overline{B}(z, \epsilon) \cap S_{\mathbb{X}}.$

 Suppose that $y \in \overline{B}( z, \epsilon) \cap S_{\mathbb{X}}$. We show that there exists a path in $\overline{B}(z, \epsilon) \cap S_{\mathbb{X}}$ joining $y$ and $z_0$. Let $u= \frac{ z + y}{2}$. Then $\| z - u\|= \|y- u\|\leq \frac{\epsilon}{2}$. Let $v_t = \frac{(1-t)y + t z}{\|(1-t)y + t z\|}$, where $t \in [0,1]$. Claim that $ v_t \in \overline{B}( z, \epsilon) \cap S_{\mathbb{X}} $. 
Let $ v_t' = (1-t)y + t z.$ Then either $ v_t'= (1-r)  z + r u $ for some $r \in [0,1] $ or  $v_t'= (1-s) u + s y $ for some $ s \in [0,1].$ Assume $v_t'= (1-r)  z + r u$.  Then 
\begin{eqnarray*} 
  \| z- v_t' \| + \|v_t'-u\|& = & \| z- (1-r) z -r u\|+ \|(1-r) z + r u -u\| \\
								                           & = & r \| z -u\|+ (1-r)\| z - u\| \\
																					 & = & \| z - u\| \\
																					 & \leq & \frac{\epsilon}{2}. 
\end{eqnarray*}
Now, 
\begin{eqnarray*}
\|v_t-v_t'\| & = & \|v_t- \|v_t'\|v_t\|\\
             & =& |1-\|v_t'\||\\
						 & = & |\|y\|- \|v_t'\|| \\
						 & \leq & \|y-v_t'\| \\
						 & \leq & \|y-u\|+\|u-v_t'\| \\
						 & \leq & \frac{\epsilon}{2}+ \frac{\epsilon}{2}- \| z -v_t'\| \\
						\Longrightarrow \| z -v_t\|\leq \| z- v_t'\|+ \|v_t-v_t'\|\leq \epsilon.
\end{eqnarray*}
Hence $v_t \in \overline{B}(z, \epsilon) \cap S_{\mathbb{X}}$. Next assume that $v_t'= (1-s) u + s y$ for some $s \in [0,1].$  Then
\begin{eqnarray*}
\|y-v_t'\|+\|v_t'-u\|& =& \|y-(1-s)u-s y\|+\|(1-s)u+ s y -u\|\\
                     & =& (1-s)\|y-u\|+ s\|y-u\|\\
										&=&\|y-u\|\\
										&\leq & \frac{\epsilon}{2}.
\end{eqnarray*}
Again, 
\begin{eqnarray*} 
\|v_t-v_t'\|&=& \|v_t- \|v_t'\|v_t\|\\
            &=&|1-\|v_t'\||\\
						&=& |\|y\|- \|v_t'\||\\
						&\leq &\|y-v_t'\|.
\end{eqnarray*}
Hence 
\begin{eqnarray*}
\| z - v_t\|& \leq & \| z - u\|+ \|u-v_t\|\\
            &\leq & \frac{\epsilon}{2}+ \|u-v_t'\|+\|v_t'-v_t\|\\
						&\leq &\frac{\epsilon}{2}+ \|u-v_t'\|+\|v_t'-y\|\\
						&\leq & \epsilon.
\end{eqnarray*}
Thus, $ v_t \in \overline{B}(z, \epsilon) \cap S_{\mathbb{X}}$.
Consider the map $ f : [0,1] \longrightarrow \overline{B}( z, \epsilon) \cap S_{\mathbb{X}} $ by $f(t) = v_t.$  Then $f$ is a path in $\overline{B}( z, \epsilon) \cap S_{\mathbb{X}}$ joining $f(0) = y$ and $f(1) = z_0.$ Hence  $\overline{B}( z, \epsilon) \cap S_{\mathbb{X}}$ is path connected.   This proves the lemma.   
\end{proof}

\begin{lemma}\label{lemma:chem}
Let $\mathbb{X}$ be a two-dimensional Banach space. Let $z \in S_{\mathbb{X}}$ and $\epsilon \in [0,1)$. Then $A= [\bigcup_{\alpha \in \mathbb{R}}\overline{B}(\alpha z, \epsilon)] \cap S_{\mathbb{X}}$ has at most two components.
\end{lemma}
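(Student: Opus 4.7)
My plan is to reduce the problem to an application of Lemma \ref{lemma:connected} together with a careful accounting of which fibers $\overline{B}(\alpha z,\epsilon) \cap S_{\mathbb{X}}$ are nonempty.

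First I would determine the set $I = \{\alpha \in \mathbb{R} : \overline{B}(\alpha z, \epsilon) \cap S_{\mathbb{X}} \neq \emptyset\}$. Since $\|z\| = 1$, for any $w \in S_{\mathbb{X}}$ the reverse triangle inequality gives $\|w - \alpha z\| \geq \bigl||\alpha| - 1\bigr|$, so nonemptiness forces $|\alpha| \in [1-\epsilon, 1+\epsilon]$. Conversely, for $\alpha$ in this range the element $w = \mathrm{sgn}(\alpha)\, z \in S_{\mathbb{X}}$ satisfies $\|w - \alpha z\| = \bigl||\alpha|-1\bigr| \leq \epsilon$. Since $\epsilon < 1$, the two intervals $I_{+} = [1-\epsilon, 1+\epsilon]$ and $I_{-} = [-(1+\epsilon), -(1-\epsilon)]$ are disjoint, and
\[
A \;=\; A_{+} \cup A_{-}, \quad \text{where } A_{\pm} = \bigcup_{\alpha \in I_{\pm}} \bigl(\overline{B}(\alpha z, \epsilon) \cap S_{\mathbb{X}}\bigr).
\]

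Next I would show that each of $A_{+}$ and $A_{-}$ is (path-)connected, which immediately bounds the number of components of $A$ by two. For this I would invoke Lemma \ref{lemma:connected}: for every $\alpha \in I_{+}$ the fiber $\overline{B}(\alpha z, \epsilon) \cap S_{\mathbb{X}}$ is nonempty and hence path-connected. The key geometric observation is that the point $z \in S_{\mathbb{X}}$ lies in every such fiber for $\alpha \in I_{+}$, since $\|z - \alpha z\| = |1 - \alpha| \leq \epsilon$. Therefore $A_{+}$ is a union of path-connected sets sharing the common point $z$, and so $A_{+}$ itself is path-connected. By the same reasoning applied to $-z$, the set $A_{-}$ is path-connected.

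Combining these two facts, $A$ is the union of at most two path-connected pieces and therefore has at most two components, as required. I do not expect any real obstacle here; the only point that needs care is the bookkeeping verifying that $I_{+}$ and $I_{-}$ are indeed disjoint (which uses $\epsilon < 1$) and that the common point $\pm z$ genuinely lies in every relevant fiber. Both are immediate from $\|z\| = 1$.
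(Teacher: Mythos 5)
Your proposal is correct and follows essentially the same route as the paper: restrict to $|\alpha|\in[1-\epsilon,1+\epsilon]$ via the reverse triangle inequality, apply Lemma \ref{lemma:connected} to each fiber, and glue the fibers over each of the two intervals using the common point $z$ (resp.\ $-z$). No issues.
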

\begin{proof}
If $0 \leq \alpha < 1- \epsilon$, then for any $w \in S_{\mathbb{X}}$, we have, $\|\alpha z- w\|\geq \|w\|- \|\alpha z \|= 1- \alpha > \epsilon$. If $\alpha > 1+ \epsilon$, then for any $w \in S_{\mathbb{X}}$, $\|\alpha z -w\|\geq \|\alpha z\|-\|w\|=\alpha -1 > \epsilon$. Similarly, if $-1+ \epsilon < \alpha \leq 0$ or $\alpha < -1- \epsilon$, then for any $w \in S_{\mathbb{X}}$, we have, $\|\alpha z - w\|> \epsilon$. Therefore,
\[ A= [\{\bigcup_{\alpha \in [1- \epsilon,1+ \epsilon]} \overline{B}(\alpha z , \epsilon)\}\cap S_{\mathbb{X}}]\bigcup [\{\bigcup_{\alpha \in [-1- \epsilon,-1+ \epsilon]} \overline{B}(\alpha z , \epsilon)\}\cap S_{\mathbb{X}}].\] Let 
\[A_1=[\{\bigcup_{\alpha \in [1- \epsilon,1+ \epsilon]} \overline{B}(\alpha z , \epsilon)\}\cap S_{\mathbb{X}}], \]
\[A_2=[\{\bigcup_{\alpha \in [-1- \epsilon,-1+ \epsilon]} \overline{B}(\alpha z , \epsilon)\}\cap S_{\mathbb{X}}].\] Clearly, to prove the lemma, it is sufficient to show that both $A_1,A_2$ are connected. Now, $A_1=[\{\bigcup_{\alpha \in [1- \epsilon,1+ \epsilon]} \overline{B}(\alpha z , \epsilon)\}\cap S_{\mathbb{X}}]$. It is easy to see that for each $\alpha \in [1-\epsilon,1+\epsilon]$, $z \in \overline{B}(\alpha z, \epsilon)\cap S_{\mathbb{X}}$. Also, by Lemma \ref{lemma:connected}, $\overline{B}(\alpha z, \epsilon)\cap S_{\mathbb{X}}$ is path connected for each $\alpha \in [1- \epsilon,1+\epsilon]$. Since union of connected sets having nonempty intersection is connected, $A_1$ is connected. Similarly, we can show that $A_2$ is connected. This completes the proof of the lemma.   
\end{proof}

Let us now exhibit the interconnection between $ G(x, \epsilon) $ and normal cones in a two-dimensional Banach space.
\begin{theorem}\label{theorem:chem}
Let $\mathbb{X}$ be a two-dimensional Banach space. Let $x \in S_{\mathbb{X}}$ be a smooth point and $\epsilon \in [0,1)$. Then there exists a normal cone $K$ in $\mathbb{X}$ such that $G(x, \epsilon)= K \cup (-K)$. 
\end{theorem}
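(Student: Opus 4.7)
The plan is to combine the characterization from \cite{CSW} (Theorem~1.1 in the introduction) with Lemma~\ref{lemma:chem} in order to identify $G(x,\epsilon) \cap S_{\mathbb{X}}$ with the set $A$ appearing in that lemma, and then to peel a normal cone off one of the two components of $A$.

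Since $x$ is a smooth point, there is a unique (up to sign) $z_0 \in S_{\mathbb{X}}$ with $x \perp_B z_0$, and every vector $z'$ satisfying $x \perp_B z'$ is of the form $\alpha z_0$. For $y \notin \mathrm{span}(x)$ we have $\mathrm{Lin}\{x,y\}=\mathbb{X}$, so the quoted characterization reduces to $y \in G(x,\epsilon) \Leftrightarrow \exists\,\alpha \in \mathbb{R},\ \|y-\alpha z_0\| \leq \epsilon\|y\|$. Since $x \perp_B z_0$ forces $\|x \pm \alpha z_0\| \geq 1 > \epsilon$ for every $\alpha$, the points $\pm x$ are not in $G(x,\epsilon)$; hence, by homogeneity,
\[
G(x,\epsilon) \cap S_{\mathbb{X}} \;=\; \{\, y \in S_{\mathbb{X}} : \exists\,\alpha \in \mathbb{R},\ \|y - \alpha z_0\| \leq \epsilon\,\} \;=\; A,
\]
where $A$ is the set from Lemma~\ref{lemma:chem} with $z = z_0$. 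By Lemma~\ref{lemma:chem}, $A$ has at most two connected components; from its proof these are $A_1$ (containing $z_0$) and $A_2 := -A_1$ (containing $-z_0$). Because $A$ is antipodally symmetric yet $\pm x \notin A$, we have $A \neq S_{\mathbb{X}}$; since the only connected antipodally symmetric closed subsets of the topological circle $S_{\mathbb{X}}$ are $\emptyset$ and $S_{\mathbb{X}}$ itself, $A$ must be disconnected, i.e., $A_1 \cap A_2 = \emptyset$.

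Let $v_1,v_2 \in S_{\mathbb{X}}$ be the endpoints of the closed arc $A_1$. Then $v_1 \neq -v_2$, for otherwise $-v_1 = v_2 \in A_1 \cap (-A_1) = A_1 \cap A_2 = \emptyset$. Define
\[
K := \{\,\alpha v_1 + \beta v_2 : \alpha,\beta \geq 0\,\},
\]
which is automatically closed under addition and non-negative scaling, and satisfies $K \cap (-K) = \{0\}$ precisely because $v_1 \neq -v_2$; thus $K$ is a normal cone (with the degenerate half-line interpretation when $v_1 = v_2$, which forces $\epsilon = 0$). To conclude $G(x,\epsilon) = K \cup (-K)$, it suffices, by antipodal symmetry and homogeneity, to show $A_1 = K \cap S_{\mathbb{X}}$. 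Both sets are closed arcs on $S_{\mathbb{X}}$ joining $v_1$ and $v_2$; neither passes through $-v_1$ or $-v_2$ ($A_1$ because $-v_i \in A_2$ which is disjoint from $A_1$, and $K \cap S_{\mathbb{X}}$ because an equation $-v_i = \alpha v_1 + \beta v_2$ with $\alpha,\beta \geq 0$ would contradict the linear independence of $v_1, v_2$). Since $\pm v_1, \pm v_2$ are four distinct points on the topological circle $S_{\mathbb{X}}$, exactly one of the two arcs joining $v_1$ to $v_2$ avoids both $-v_1$ and $-v_2$, forcing $A_1 = K \cap S_{\mathbb{X}}$.

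The step demanding the most care is the disconnectedness of $A$ (equivalently $A_1 \cap A_2 = \emptyset$): the naive triangle-inequality bound $\|r\|+\|r'\|\le 2\epsilon$ applied to witnesses only settles this for $\epsilon < 1/2$, so the proof genuinely needs the topological fact about antipodally symmetric subsets of $S^1$ to cover the full range $\epsilon \in [0,1)$. Once that is in place, the identification $A_1 = K \cap S_{\mathbb{X}}$ is a routine arc-counting argument using the four points $\pm v_1, \pm v_2$.
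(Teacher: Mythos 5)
Your proposal is correct and follows essentially the same route as the paper: reduce via the Chmieli\'nski--Stypula--W\'ojcik characterization to the set $A=\bigl[\bigcup_{\alpha}\overline{B}(\alpha z_0,\epsilon)\bigr]\cap S_{\mathbb{X}}$, invoke Lemma~\ref{lemma:chem} to get at most two components, and conclude that the two (antipodal, closed) components are arcs determining the normal cone. You are somewhat more explicit than the paper at the two points it glosses over --- the disconnectedness of $A$ for all $\epsilon\in[0,1)$ and the identification of a component with $K\cap S_{\mathbb{X}}$ --- but the underlying argument is the same.
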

\begin{proof}
It follows from the characterization of $\epsilon-$Birkhoff-James orthogonality given by Chmieli\'nski et. al.in \cite{CSW} that
\[ G(x,\epsilon)\cap S_{\mathbb{X}}=\{y\in S_{\mathbb{X}}:\|\alpha z - y\| \leq \epsilon ~\forall \alpha \in \mathbb{R}\},\]
 where $z \in S_{\mathbb{X}}$ is the unique (up to multiplication by $\pm 1$) vector such that $x \perp_B z$, since $x \in S_{\mathbb{X}}$ is smooth. Therefore, $G(x, \epsilon)\cap S_{\mathbb{X}}= [\bigcup_{\alpha \in \mathbb{R}}\overline{B}(\alpha z, \epsilon)] \cap S_{\mathbb{X}}$. Applying Lemma \ref{lemma:chem}, we see that $G(x, \epsilon)\cap S_{\mathbb{X}}$ has at most two connected components. However, it is easy to observe that $\pm z \in G(x, \epsilon )\cap S_{\mathbb{X}} $ and $\pm x \notin G(x, \epsilon )\cap S_{\mathbb{X}}$. Therefore, $G(x, \epsilon )\cap S_{\mathbb{X}}$ has exactly two connected components, one of them containing $z$ and the other containing $-z$. Since $G(x, \epsilon )\cap S_{\mathbb{X}}$ is closed, each of the components must be closed in $S_{\mathbb{X}}$. It also follows from the property of $G(x, \epsilon)$ that the two components of $G(x, \epsilon)$ are symmetric with respect to origin. Since $\mathbb{X}$ is a two-dimensional Banach space, any connected component of $S_{\mathbb{X}}$, not containing two antipodal points, must be an arc of $S_{\mathbb{X}}$. Therefore, there exists $v_1, v_2 \in S_{\mathbb{X}}$ such that
 \[G(x, \epsilon)\cap S_{\mathbb{X}} =\{\frac{(1-t)v_1+ t v_2}{\|(1-t)v_1 + t v_2\|}: t \in [0,1]\} \cup \{\frac{-(1-t)v_1- t v_2}{\|-(1-t)v_1 - t v_2\|}: t \in [0,1]\}.\]
 Let $K$ be the normal cone determined by $v_1, v_2$. Then from the properties of $G(x, \epsilon),$ it is clear that $G(x, \epsilon)= K \cup (-K)$. This completes the proof of the theorem. 
\end{proof}

Let $ Q_{x, y}(\epsilon) $ denote the restriction of  $ G(x, \epsilon) $ to the subspace spanned by $x$ and $y$. The following theorem, that describes the structure of $ G(x, \epsilon) $ in any normed space, is immediate. 
\begin{theorem}\label{theorem:any dimension chem}
Let $\mathbb{X}$ be a normed space. Let $x \in \mathbb{X}$  be a smooth point and $\epsilon \in [0,1)$. Then $G(x, \epsilon)= \bigcup_{y\in \mathbb{X}}Q_{x,y}(\epsilon)$. In particular, $ G(x, \epsilon) $ is a union of two-dimensional normal cones.
\end{theorem}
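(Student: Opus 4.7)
The plan is to mirror the proof of Theorem \ref{theorem:any dimension}: exploit that the defining condition $\|x+\lambda y\|^2 \geq \|x\|^2 - 2\epsilon\|x\|\|\lambda y\|$ only refers to vectors in $\mathrm{span}\{x,y\}$, so approximate orthogonality $\bot_B^{\epsilon}$ is an intrinsically two-dimensional notion, and then apply Theorem \ref{theorem:chem} slice by slice.

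First I would record the trivial but crucial observation that $x \bot_B^{\epsilon} y$ holds in $\mathbb{X}$ if and only if it holds in the subspace $\mathrm{span}\{x,y\}$ (since the norm on this subspace is inherited from $\mathbb{X}$). Consequently, $Q_{x,y}(\epsilon)$ coincides with the set $G(x,\epsilon)$ computed inside $\mathrm{span}\{x,y\}$. The set-theoretic identity $G(x,\epsilon)=\bigcup_{y\in\mathbb{X}}Q_{x,y}(\epsilon)$ is then a one-line verification: $\supseteq$ is immediate, and for $\subseteq$ any $w\in G(x,\epsilon)$ lies in $\mathrm{span}\{x,w\}$ and already satisfies the defining inequality there, so $w\in Q_{x,w}(\epsilon)$.

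To conclude that $G(x,\epsilon)$ is a union of two-dimensional normal cones, I would invoke Theorem \ref{theorem:chem} on each two-dimensional subspace $Y=\mathrm{span}\{x,y\}$, which gives $Q_{x,y}(\epsilon)=K_y\cup(-K_y)$ for some normal cone $K_y\subset Y$. The one step that is not purely formal is that Theorem \ref{theorem:chem} requires $x$ to be a smooth point of the two-dimensional space $Y$, whereas we are given smoothness of $x$ in the ambient space $\mathbb{X}$. This step is the main (and only) potential obstacle, but it is handled by a standard Hahn--Banach extension argument: if $x$ admitted two linearly independent norm-one supporting functionals in $Y$, each could be extended to a norm-one supporting functional at $x$ on all of $\mathbb{X}$, contradicting smoothness of $x$ in $\mathbb{X}$. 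Equivalently, the hyperplane $\{z\in\mathbb{X}:x\bot_B z\}$ meets the two-dimensional subspace $Y$ in a one-dimensional subspace, so the Birkhoff--James orthogonal direction in $Y$ is unique up to sign, i.e.\ $x$ is smooth in $Y$.

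Putting these together, $G(x,\epsilon)=\bigcup_{y\in\mathbb{X}}Q_{x,y}(\epsilon)=\bigcup_{y\in\mathbb{X}}\bigl(K_y\cup(-K_y)\bigr)$, exhibiting $G(x,\epsilon)$ as a union of two-dimensional normal cones. The degenerate contribution from $y\in\mathrm{span}\{x\}$ poses no issue, as such $y$ either contribute nothing new or contribute only $\{0\}$, which is already contained in every $K_y$. Hence the theorem follows directly from Theorem \ref{theorem:chem} combined with the stability of smoothness under restriction to two-dimensional subspaces.
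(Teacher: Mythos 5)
Your proposal is correct and follows exactly the route the paper intends: the paper declares the theorem ``immediate'' from Theorem \ref{theorem:chem} and the two-dimensional nature of the condition $x\bot_B^{\epsilon}y$, which is precisely the decomposition you carry out. Your additional observation that smoothness of $x$ in $\mathbb{X}$ passes to every two-dimensional subspace containing $x$ (via Hahn--Banach) is a correct filling-in of the one detail the paper leaves unstated.
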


\begin{remark}

Theorem $ 2.3 $ and Theorem $ 2.8 $ together imply that in a normed space, both $ F(x, \epsilon) $ and $ G(x, \epsilon) $ are unions of two-dimensional normal cones. However, since these two different types of  approximate Birkhoff-James orthogonality do not coincide in a general normed space, it follows that the constituent two-dimensional normal cones for $ F(x, \epsilon) $ and $ G(x, \epsilon) $ may not be identical. 

\end{remark}

\begin{acknowledgement}
The authors would like to heartily thank Professor Vladimir Kadets for his extremely insightful input towards proving Theorem 2.7.\\
 The first author feels elated to acknowledge the blissful presence of Mr. Arunava Chatterjee, his childhood friend and a brilliant strategist, in every sphere of his life.
\end{acknowledgement}

\end{document}